\theoremstyle{plain}
\newtheorem{lem}{\textbf{Lemma}}[section]
\newtheorem{thm}[lem]{\textbf{Theorem}}
\newtheorem{de}[lem]{\textbf{Definition}}
\newtheorem{op}{\textbf{Open problem}}
\newtheorem{ex}[lem]{\textbf{Example}}
\newtheorem{pr}[lem]{\textbf{Proposition}}
\theoremstyle{definition}
\theoremstyle{Remark}
\newtheorem{rem}[lem]{\textbf{Remark}}
\begin{document}

\noindent {}   \\[0.50in]


\title[A Strong Convergence Theorem for Bregman Demimetric Mappings]{A Strong Convergence Theorem for a finite family of Bregman Demimetric Mappings in a Banach Space under a New Shrinking projection Method}

\author[  Orouji, Soori, O'Regan, Agarwal]{Bijan Orouji$^{1}$,  Ebrahim Soori$^{2,*}$,  Donal O'Regan$^{3}$, Ravi P. Agarwal${^{4}}$}
\thanks{$^{*}$ Corresponding author\\ 2010 Mathematics Subject Classification. 47H10.}

\address{$^{1,2}$Department of Mathematics, Lorestan University,  P.O. Box 465, Khoramabad, Lorestan, Iran\\
$^{3}$School of Mathematics, Statistics, National University of Ireland, Galway, Ireland\\
$^{4}$Department of Mathematics, Texas A $\&$ M University Kingsville, Kingsville, USA,}

\email{bijanorouji@yahoo.com(B. Orouji); sori.e@lu.ac.ir(E. Soori);
donal.oregan@nuigalway.ie(D. O'Regan); agarwal@tamuk.edu(R. P. Agarwal)}



\keywords{}
\maketitle
\hrule width \hsize \kern 1mm

\begin{abstract}
In this paper, using a new shrinking projection method and new generalized $k$-demimetric mappings, we consider the strong convergence for finding a common point of the sets of zero points of maximal monotone mappings, common fixed points of a finite family of Bregman $k$-demimetric mappings and common zero points of a finite family of Bregman inverse strongly monotone mappings in a reflexive Banach space. To the best of our knowledge such a theorem for Bregman $k$-demimetric mapping is the first of its kind in a Banach space.


\textbf{Keywords}: Bregman distance; Bregman $k$-demimetric mappings; Bregman quasi-nonexpansive mappings; Bregman inverse strongly monotone mappings; Maximal monotone operators.
\end{abstract}
\maketitle
\vspace{0.1in}
\hrule width \hsize \kern 1mm

\section{\textbf{Introduction}}
Let $H$ be a Hilbert space and let $C$ be a nonempty, closed and convex subset of H. Let $T:C\rightarrow H$ be a mapping. Then we denote by $F(T)$ the set of fixed points of $T$. For a real number $t$ with $0\leq t\leq1$, a mapping $U:C\rightarrow H$ is said to be a $t$-strict pseudo-contraction \cite{p9} if
\begin{equation*}
\|Ux-Uy\|^2\leq\|x-y\|^2+t\|x-Ux-(y-Uy)\|^2,
\end{equation*}
for all $x,y\in C$. In particular, if $t=0$, then $U$ is nonexpansive, i.e.,
\begin{equation*}
\|Ux-Uy\|\leq \|x-y\|,\;\forall x,y\in C.
\end{equation*}
If $U$ is a $t$-strict pseudo-contraction and $F(U)\neq\emptyset$, then we get that, for $x\in C$ and $p\in F(U)$,
\begin{equation*}
\|Ux-p\|^2\leq\|x-p\|^2+t\|x-Ux\|^2.
\end{equation*}
From this inequality, we get that
\begin{equation*}
\|Ux-x\|^2+\|x-p\|^2+2\langle Ux-x,x-p\rangle\leq \|x-p\|^2+t\|x-Ux\|^2.
\end{equation*}
Then we get that
\begin{equation}\label{edsnjh}
2\langle x-Ux,x-p\rangle\geq(1-t)\|x-Ux\|^2.
\end{equation}
A mapping $U:C\rightarrow H$ is said generalized hybrid \cite{p17} if there exist real numbers $\alpha,\beta$ such that
\begin{equation*}
\alpha\|Ux-Uy\|^2+(1-\alpha)\|x-Uy\|^2\leq\beta\|Ux-y\|^2+(1-\beta)\|x-y\|^2,
\end{equation*}
for all $x,y\in C$. Such a mapping $U$ is said to be $(\alpha,\beta)$-generalized hybrid. The class of generalized hybrid mappings covers several well-known mapping. A $(1,0)$-generalized hybrid mapping is nonexpansive. For $\alpha=2$ and $\beta=1$, it is nonspreading \cite{p18,p19}, i.e.,
\begin{equation*}
2\|Ux-Uy\|^2\leq\|Ux-y\|^2+\|Uy-x\|^2,\quad\forall x,y\in C.
\end{equation*}
For $\alpha=\frac{3}{2}$ and $\beta=\frac{1}{2}$, it is also hybrid \cite{p27}, i.e.,
\begin{equation*}
3\|Ux-Uy\|^2\leq\|x-y\|^2+\|Ux-y\|^2+\|Uy-x\|^2,\quad\forall x,y\in C.
\end{equation*}
In general, nonspreading mappings and hybrid mappings are not continuous: see \cite{p15}. If $U$ is a generalized hybrid and $F(U)\neq\emptyset$, then we get that, for $x\in C$ and $p\in F(U)$,
\begin{equation*}
\alpha\|p-Ux\|^2+(1-\alpha)\|p-Ux\|^2\leq\beta\|p-x\|^2+(1-\beta)\|p-x\|^2
\end{equation*}
and hence $\|Ux-p\|^2\leq \|x-p\|^2$. From this, we have that
\begin{equation}\label{xcrjv}
2\langle x-p,x-Ux\rangle\geq\|x-Ux\|^2.
\end{equation}
Let $E$ be a smooth Banach space and let $G$ be a maximal monotone mapping with $G^{-1}0\neq\emptyset$. Then, for the metric resolvent $J_\lambda$ of $G$ for a positive number $\lambda>0$, we obtain from \cite{p1,p28} that, for $x\in E$ and $p\in G^{-1}0=F(J_\lambda)$,
\begin{equation*}
\langle J_\lambda x-p,J(x-J_\lambda x)\rangle\geq0.
\end{equation*}
Then we get
\begin{equation*}
\langle J_\lambda x-x+x-p,J(x-J_\lambda x)\rangle\geq0.
\end{equation*}
and hence
\begin{equation}\label{eiuyn}
\langle x-p,J(x-J_\lambda x)\rangle\geq\|x-J_\lambda x\|^2,
\end{equation}
where $J$ is the duality mapping on $E$. Motivated by \eqref{edsnjh}, \eqref{xcrjv} and \eqref{eiuyn}, Takahashi \cite{p29} introduced a nonlinear mapping in a Banach space as follows: Let $C$ be a nonempty, closed and convex subset of a smooth Banach space $E$ and let $\eta$ be a real number with $\eta\in (-\infty,1)$. A mapping $U:C\rightarrow E$ with $F(U)\neq\emptyset$ is said to be $\eta$-demimetric if, for $x\in C$ and $p\in F(U)$,
\begin{equation*}
2\langle x-p,J(x-Ux)\rangle\geq(1-\eta)\|x-Ux\|^2.
\end{equation*}
According to this definition, we have that a $t$-strict pseudo-contraction $U$ with $F(U)\neq\emptyset$ is $t$-demimetric, an $(\alpha,\beta)$-generalized hybrid mapping $U$ with $F(U)\neq\emptyset$ is $0$-demimetric and the metric resolvent $J_\lambda$ with $G^{-1}0\neq\emptyset$ is $(-1)$-demimetric.

On the other hand, in 1967, Bregman \cite{p7} discovered an effective technique using the so-called Bregman distance function $D_f$ in the process of designing and analysing feasibility and optimization algorithms. This led to a growing area of research in which Bregman's technique is applied in various ways in order to design and analyse iterative algorithms for solving  feasibility problems, equilibrium problems, fixed point problems for nonlinear mappings, and so on (see, e.g. \cite{p21,p25} and the references therein).

In 2010, Reich and Sabach \cite{p21}  using the Bregman distance function $D_f$ introduced the concept of Bregman strongly nonexpansive mappings and studied the convergence of two iterative algorithms for finding common fixed points of finitely many Bregman strongly nonexpansive operators in reflexive Banach spaces.

In this paper, motivated by Takahashi \cite{p31q}, we generalize $k$-demimetric mappings by the  Bregman distance and using a new shrinking projection method, we deal with the strong convergence for finding a common point of the sets of zero points of a maximal monotone mapping, common fixed points of a finite family of Bregman $k$-demimetric mapping and common zero points of finite family of Bregman inverse strongly monotone mapping in a reflexive Banach space.
\section{\textbf{Preliminaries}}
Let $E$ be a reflexive real Banach space and $C$ be a nonempty closed and convex subset of $E$. Throughout this paper, the dual space of $E$ is denoted by $E^*$. The norm and duality pairing between $E$ and $E^*$ are respectively denoted by $\|.\|$ and $\langle.,.\rangle$. Let $\{x_n\}_{n\in \mathbb{N}}$ be a sequence in $E$, and we denote the strong convergence of $\{x_n\}_{n\in \mathbb{N}}$ to $x\in E$ as $n\rightarrow\infty$ by $x_n\rightarrow x$ and the weak convergence by $x_n\rightharpoonup x$.\\
Throughout this paper, $f:E\rightarrow(-\infty,+\infty]$ is a proper, lower semicontinuous and convex function. We denote by $dom f:=\{x\in E;\;f(x)<\infty\}$, the domain of $f$.  The function f is said to be strongly coercive if $\displaystyle\lim_{\|x\|\rightarrow\infty}\frac{f(x)}{\|x\|}=+\infty$. Let $x\in$ \textit{int\,dom}$f$, and the subdifferential of $f$ at $x$ is the convex mapping set $\partial f:E\rightarrow 2^{E^*}$ defined by
\begin{equation*}
\partial f(x)=\{\xi\in E^*:\;f(x)+\langle y-x,\xi\rangle\leq f(y),\;\forall y\in E\},\quad \forall x\in E,
\end{equation*}
 and $f^*:E^*\rightarrow (-\infty,+\infty]$ is the Fenchel conjugate of $f$ defined by
\begin{align*}
  f^*(\xi)=sup\{\langle \xi,x\rangle-f(x):x\in E\}.
\end{align*}
It is well known that $\xi \in \partial f(x)$ is equivalent to
\begin{align*}
f(x)+ f^*(\xi)=\langle x, \xi\rangle.
\end{align*}
For any $x\in$ \textit{int\,dom}$f$ and $y\in E$, we denote by $f^\circ(x,y)$ the right-hand dervative of $f$ at $x$ in the direction $y$, that is,
\begin{equation}\label{solkf}
f^\circ(x,y):=\displaystyle\lim_{t\rightarrow0^+}\frac{f(x+ty)-f(x)}{t}.
\end{equation}
The function $f$ is called G$\hat{\text{a}}$teaux differentiable at $x$, if the limit in \eqref{solkf}
 exists for any $y\in E$. In this case, the gradient of $f$ at $x$ is the linear function $\nabla f$ which is defined by $\langle y,\nabla f(x)\rangle:=f^\circ(x,y)$ for any $y\in E$. The function $f$ is said to be G$\hat{\text{a}}$teaux differentiable if it is G$\hat{\text{a}}$teaux differentiable at each $x\in int\,dom f$. The function $f$ is said to be Fr$\acute{\text{e}}$chet differentiable at $x$, if the limit in \eqref{solkf} is attained uniformly in $\|y\|=1$, for any $y\in E$. Finally, $f$ is said to be uniformly Fr$\acute{\text{e}}$chet differentiable on a subset $C$ of $E$, if the limit in \eqref{solkf} is attained uniformly for $x\in C$ and $\|y\|=1$.
\begin{lem}\cite{p21}\label{zakhv}
  If $f:E\rightarrow \mathbb{R}$ is uniformly $Fr\acute{e}chet$ differentiable and bounded on bounded subsets of $E$, then $f$ is uniformly continuous on bounded subsets of $E$ and $\nabla f$ is uniformly continuous on bounded subsets of $E$ from the strong topology of $E$ to the strong topology of $E^*$.
\end{lem}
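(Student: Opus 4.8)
Since the statement is a standard fact of convex analysis (hence the citation to \cite{p21}), the plan is to derive both conclusions directly from the two hypotheses, using convexity together with boundedness for the uniform continuity of $f$, and then feeding that into the uniform Fr\'echet differentiability to control $\nabla f$. First I would fix a bounded set $C\subseteq E$ and pass to a bounded enlargement $C'=\{x:\operatorname{dist}(x,C)\le r\}$ for a fixed $r\in(0,1)$, so that every point of the form $x+v$ with $x\in C$ and $\|v\|\le r$ lies in $C'$; all uniform hypotheses will then be invoked on $C'$, where $f$ is bounded, say with $M=\sup_{C'}f$ and $m=\inf_{C'}f$.

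For the first conclusion I would use only convexity and boundedness. Given $x,z\in C$ with $x\ne z$, extend the segment $[x,z]$ a fixed length $r$ beyond $z$ to the point $u=z+r\frac{z-x}{\|z-x\|}\in C'$, so that $z=(1-\lambda)x+\lambda u$ with $\lambda=\frac{\|x-z\|}{\|x-z\|+r}\le \frac{\|x-z\|}{r}$. Convexity gives $f(z)\le(1-\lambda)f(x)+\lambda f(u)$, whence $f(z)-f(x)\le\lambda\,(M-m)\le\frac{M-m}{r}\|x-z\|$; by symmetry $f$ is Lipschitz on $C$ with constant $\frac{M-m}{r}$, hence uniformly continuous on $C$. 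Let $\omega$ denote a modulus of continuity for $f$ on $C'$ obtained this way.

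For the second conclusion the key input is the remainder form of uniform Fr\'echet differentiability: for every $\epsilon>0$ there is $\delta\in(0,r)$ such that
\[
\bigl| f(x+v)-f(x)-\langle v,\nabla f(x)\rangle\bigr|\le \epsilon\|v\|,\qquad x\in C',\ \|v\|<\delta,
\]
which is exactly \eqref{solkf} rewritten with $v=ty$, $\|y\|=1$. Take $x,z\in C$, put $\xi=\nabla f(x)-\nabla f(z)\in E^*$, and choose by the definition of the dual norm a $y$ with $\|y\|=1$ and $\langle y,\xi\rangle\ge\|\xi\|-\epsilon$. Applying the displayed estimate at $x$ and at $z$ with $v=ty$ for a fixed $t\in(0,\delta)$ and subtracting yields
\[
t\langle y,\xi\rangle=\bigl[f(x+ty)-f(x)\bigr]-\bigl[f(z+ty)-f(z)\bigr]-r(x,ty)+r(z,ty),
\]
where $|r(\cdot,ty)|\le\epsilon t$. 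Regrouping the bracketed terms as $[f(x+ty)-f(z+ty)]-[f(x)-f(z)]$ and noting $\|(x+ty)-(z+ty)\|=\|x-z\|$, uniform continuity bounds them by $2\omega(\|x-z\|)$. Hence
\[
\|\xi\|-\epsilon\le\langle y,\xi\rangle\le\frac{2\,\omega(\|x-z\|)}{t}+2\epsilon,
\]
so $\|\nabla f(x)-\nabla f(z)\|\le\tfrac{2}{t}\,\omega(\|x-z\|)+3\epsilon$; fixing $t$ first and then shrinking $\|x-z\|$ so that $\omega(\|x-z\|)<\tfrac{t\epsilon}{2}$ gives $\|\nabla f(x)-\nabla f(z)\|\le 4\epsilon$, i.e. strong-to-strong uniform continuity of $\nabla f$ on $C$.

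The main obstacle is the second conclusion, and within it the passage from the scalar, direction-wise differentiability estimate to a genuine $E^*$-norm estimate on the difference of two gradients: this is where the dual-norm extraction of a near-optimal direction $y$ and the coupling with the modulus $\omega$ from the first part do the real work. One must also watch the quantifier order---fix the auxiliary parameter $t$ first, then contract $\|x-z\|$---and keep track of the enlargement $C\subseteq C'$ so that every point at which a uniform hypothesis is used genuinely lies in the set on which that hypothesis was assumed.
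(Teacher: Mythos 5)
Your proof is correct. The paper itself offers no proof of this lemma --- it is imported verbatim from the cited reference of Reich and Sabach --- so there is nothing internal to compare against; your argument is essentially the standard one underlying that reference: convexity plus boundedness on an enlarged bounded set gives a uniform Lipschitz bound (hence uniform continuity and a modulus $\omega$), and the telescoping of the two first\-/order Taylor remainders against a near\-/norming direction $y$ for $\nabla f(x)-\nabla f(z)$ converts the scalar difference\-/quotient estimate into the $E^*$\-/norm estimate $\|\nabla f(x)-\nabla f(z)\|\le \tfrac{2}{t}\omega(\|x-z\|)+3\epsilon$. The quantifier order you flag (fix $t<\delta$ first, then shrink $\|x-z\|$) and the bookkeeping of the enlargement $C\subseteq C'$ are exactly the points where a careless version would fail, and you handle both correctly.
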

\begin{pr}\cite{p31}
 Let $f:E\rightarrow \mathbb{R}$ be a convex function which is bounded on bounded subsets of $E$. Then the following assertions are equivalent:
 \begin{itemize}
  \item [{\rm(i)}] $f$ is strongly coercive and uniformly convex on bounded subsets of $E$.
  \item [{\rm(ii)}] $f^*$ is $Fr\acute{e}chet$ differentiable and $\triangledown f^*$ is uniformly norm-to-norm continuous on bounded subsets of $domf^*=E^*$.
  \end{itemize}
\end{pr}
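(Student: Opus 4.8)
The plan is to exploit the standard duality between convexity properties of $f$ and smoothness properties of its Fenchel conjugate $f^*$, the bridge being the Fenchel--Young equality and the conjugacy of the associated moduli. Throughout I would use that, $f$ being proper, lower semicontinuous and convex, one has $f^{**}=f$ together with the equivalence
\[
\xi\in\partial f(x)\iff x\in\partial f^*(\xi)\iff f(x)+f^*(\xi)=\langle x,\xi\rangle,
\]
so that one may pass freely between the primal data $f$ and the dual data $f^*$. Both implications (i)$\Rightarrow$(ii) and (ii)$\Rightarrow$(i) are then read off from two such dual correspondences, treated separately.

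First I would handle the coercivity part. A classical result of Moreau--Rockafellar type asserts that $f$ is strongly coercive, that is $\lim_{\|x\|\to\infty}f(x)/\|x\|=+\infty$, if and only if $f^*$ is finite on all of $E^*$ and bounded on bounded subsets; the latter is precisely the assertion $\mathrm{dom}\,f^*=E^*$ together with the boundedness hypothesis standing in (ii). Thus, under either of (i) or (ii), $f^*$ is a finite convex function on $E^*$ that is bounded on bounded sets, and it remains only to match ``uniform convexity of $f$ on bounded sets'' with ``Fr\'echet differentiability of $f^*$ together with uniform norm-to-norm continuity of $\nabla f^*$ on bounded sets''.

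The heart of the proof is this second matching, via the modulus duality. Fixing a bounded set $B$, I would introduce the gauge of uniform convexity
\[
\rho(t)=\inf\Big\{\tfrac12 f(x)+\tfrac12 f(y)-f\big(\tfrac{x+y}{2}\big):\ x,y\in B,\ \|x-y\|=t\Big\},
\]
so that $f$ is uniformly convex on $B$ exactly when $\rho(t)>0$ for every $t>0$. Dually, I would introduce the modulus of smoothness of $f^*$ and invoke the fact that these two moduli are conjugate to one another. A strictly positive $\rho$ then forces the modulus of smoothness of $f^*$ to be $o(t)$ as $t\to 0$, which is uniform Fr\'echet differentiability of $f^*$; feeding this through the subdifferential bijection above upgrades it to single-valuedness and uniform norm-to-norm continuity of $\nabla f^*$ on bounded sets. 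The converse implication is obtained by reading the same conjugacy in the opposite direction.

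The step I expect to be the main obstacle is the careful localization of the modulus duality ``on bounded subsets''. The conjugacy between the modulus of convexity and the modulus of smoothness is cleanest in the global uniformly convex / uniformly smooth setting, and one must verify that restricting the infimum defining $\rho$ to a bounded set $B$ corresponds, after conjugation, exactly to the boundedness-on-bounded-sets restriction on the dual side, with no loss incurred when passing back through $f=f^{**}$. The coercivity half is comparatively routine once $\mathrm{dom}\,f^*$ is identified; it is the quantitative, and in particular \emph{uniform} rather than merely pointwise, equivalence of the two moduli that requires the real work.
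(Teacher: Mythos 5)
This proposition is not proved in the paper at all: it is imported verbatim from the cited reference \cite{p31} (Z\u{a}linescu's book), so there is no in-paper argument to compare against. Your outline is, in substance, the route taken in that reference: the equivalence splits into (a) strong coercivity of $f$ versus finiteness and boundedness-on-bounded-sets of $f^*$, handled by the Moreau--Rockafellar-type correspondence through $f=f^{**}$, and (b) uniform convexity of $f$ on bounded sets versus uniform Fr\'echet differentiability of $f^*$ on bounded sets (equivalently, single-valuedness plus uniform norm-to-norm continuity of $\nabla f^*$ there), handled by the conjugate duality of the localized gauges of convexity and smoothness. So the approach is the right one and consistent with the source being cited.

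That said, what you have written is a plan rather than a proof, and the step you yourself flag as the obstacle is precisely where all the content lies. Two points in particular are asserted but not established. First, the conjugacy of the modulus of convexity of $f$ restricted to a bounded set $B$ with a modulus of smoothness of $f^*$ restricted to a bounded set of $E^*$ only works once you know that the subdifferentials exchange bounded sets: you need $\partial f$ to carry bounded subsets of $E$ into bounded subsets of $E^*$ (this follows from $f$ being bounded on bounded sets) \emph{and} $\partial f^*$ to carry bounded subsets of $E^*$ into bounded subsets of $E$ (this is where strong coercivity enters, and it is not automatic). Without this exchange the ``localization'' of the moduli does not match up on the two sides, and the implication (ii)$\Rightarrow$(i) in particular cannot be closed. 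Second, passing from ``modulus of smoothness of $f^*$ is $o(t)$'' to ``$\nabla f^*$ exists and is uniformly norm-to-norm continuous on bounded sets'' is itself a nontrivial lemma for convex functions (the analogue of Lemma \ref{zakhv} and its converse), not a formality. If you intend this as a self-contained proof you must supply both of these; if you intend it, as the paper does, as a quotation of Z\u{a}linescu's theorem, then the honest statement is simply to cite it.
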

\begin{de}
The function $f$ is said to be 'Legendre' if it satisfies the following two conditions:
\begin{itemize}
  \item [{\rm(L1)}] $int\,dom f\neq \emptyset$ and $\partial f$ is single-valued on its domain.
  \item [{\rm(L2)}] $int\,dom f^*\neq \emptyset$ and $\partial f^*$ is single-valued on its domain.
  \end{itemize}
\end{de}
Because here the space $E$ is assumed to be reflexive, we always have $(\partial f)^{-1}=\partial f^*$. \cite[p. 83]{p6}. This fact, when combined with the conditions (L1) and (L2), implies the following equalities:
\begin{align*}
\nabla f&=(\nabla f^*)^{-1},\\
ran \nabla f&=dom \nabla f^*=int\,dom f^*,\\
ran \nabla f^*&=dom \nabla f=int\,dom f.
\end{align*}
In addition, the conditions  (L1) and (L2), in conjunction with Theorem 5.4 of \cite{p2}, imply that the functions $f$ and $f^*$ are strictly convex on the interior of their respective domains and $f$ is Legendre if and only if $f^*$ is Legendre.

One important and interesting Legendre function is $\frac{1}{p}\|.\|^p,\;p\in (1,2]$. When $E$ is a uniformly convex and $p$-uniformly smooth Banach space with $p\in (1,2]$, the generalized duality mapping $J_p:E\rightarrow2^{E^*}$ is defined by\\
\begin{equation*}
J_p(x)=\{j_p(x)\in E^*:\langle j_p(x),x\rangle=\|x\|.\|j_p(x)\|\;,\;\|j_p(x)\|=\|x\|^{p-1}\}.
\end{equation*}
In this case, the gradient $\nabla f$ of $f$ is coincide with the generalized duality mapping $J_p$ of $E$, $\nabla f=J_p,\;p\in(1,2]$.
Several interesting examples of Legendre functions are presented in \cite{p2,p3,p4}.

From now on, we always assume that the convex function $f:E\rightarrow (0,+\infty]$ is Legendre.
\begin{de}\cite{p14}
Let $f:E\rightarrow(-\infty,+\infty]$ be a convex and $G\hat{a}teaux$ differentiable function.The bifunction $D_f:domf\times int\;domf\rightarrow [0,+\infty)$ defined by
\begin{align*}
D_f(y,x):=f(y)-f(x)-\langle \nabla f(x),y-x\rangle.
\end{align*}
is called the Bregman distance with respect to f.
\end{de}
It should be noted that $D_f$ is not a distance in the usual sense of the term. Clearly, $D_f(x,x)=0$, but $D_f(y,x)=0$ may not imply $x=y$. In our case, when $f$ is Legendre this indeed holds \cite[Theorem 7.3(vi), p.642]{p2}. In general, $D_f$  satisfies the three point identity
\begin{equation}\label{awpon}
D_f(x,y)+D_f(y,z)-D_f(x,z)=\langle x-y, \nabla f(z)-\nabla f(y)\rangle,
\end{equation}
and the four point identity
\begin{equation*}
D_f(x,y)+D_f(\omega,z)-D_f(x,z)-D_f(\omega,y)=\langle x-\omega,\nabla f(z)-\nabla f(y)\rangle,
\end{equation*}
for any $x,\omega \in domf$ and $y,z\in int\,dom f$. Over the last 30 years, Bregman distances have been studied by many researchers (see \cite{p2,p5,p10,p11}).

Let $f:E\rightarrow(-\infty,+\infty]$ be a convex function on E which is G$\hat{\text{a}}$teaux differentiable on \textit{int dom}$f$. The function $f$ is said to be totally convex at a point $x\in int\,dom\,f$ if its modulus of total convexity at $x,\;v_f(x,.):[0,+\infty)\rightarrow [0,+\infty]$, defined by
\begin{align*}
 v_f(x,t)=inf \{D_f(y,x):y\in dom f,\|y-x\|=t\},
\end{align*}
is positive whenever $t>0$. The function $f$ is said to be totally convex when it is totally convex at every point of \textit{int dom}$f$. The function f is said to be totally convex on bounded sets, if for any nonempty bounded set $B\subseteq E$ , the modulus of total convexity of $f$ on $B$, $v_f(B,t)$ is positive for any $t>0$, where $v_f(B,.):[0,+\infty)\rightarrow [0,+\infty]$  is defined by
\begin{align*}
  v_f(B,t)=inf\{v_f(x,t):x\in B\cap int\,dom\,f\}.
\end{align*}
We remark in passing that $f$ is totally convex on bounded sets if and only if $f$ is uniformly convex on bounded sets; (see \cite{p11.1,p12}).
\begin{pr}\cite{p11.1}
 Let $f:E\rightarrow(-\infty,+\infty]$ be a convex function that it's domain contains at least two points. If $f$ is lower semicontinuous, then $f$ is totally convex on bounded sets if and only if $f$ is uniformly convex on bounded sets.
\end{pr}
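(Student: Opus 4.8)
The plan is to deduce both implications from two elementary identities that link the convexity gap of $f$ along a segment to the Bregman distance. For $x,y\in\mathrm{dom}\,f$, $\lambda\in(0,1)$ and $z=\lambda x+(1-\lambda)y$, put $\Delta(x,y,\lambda)=\lambda f(x)+(1-\lambda)f(y)-f(z)$, and understand uniform convexity on bounded sets as positivity, for every bounded $B$ and every $t>0$, of the modulus $\rho_f(B,t):=\inf\{\Delta(x,y,\lambda)/[\lambda(1-\lambda)]:x,y\in B,\ \|x-y\|=t,\ \lambda\in(0,1)\}$. Expanding $D_f(x,z)$ and $D_f(y,z)$ and noting that the gradient contributions collapse to $\langle\nabla f(z),z-z\rangle=0$ yields the identity
\begin{equation*}
\Delta(x,y,\lambda)=\lambda D_f(x,z)+(1-\lambda)D_f(y,z),
\end{equation*}
while the subgradient inequality $f(z)\geq f(x)+(1-\lambda)\langle\nabla f(x),y-x\rangle$ gives the one-sided estimate
\begin{equation*}
\Delta(x,y,\lambda)\leq(1-\lambda)D_f(y,x)
\end{equation*}
(where, if $f$ is not differentiable, $\langle\nabla f(x),y-x\rangle$ is read as the right-hand derivative $f^{\circ}(x,y-x)$, which exists by convexity).

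For the implication \emph{uniformly convex $\Rightarrow$ totally convex} I would use the one-sided estimate. If $\|x-y\|=t$ then uniform convexity gives $(1-\lambda)D_f(y,x)\geq\Delta(x,y,\lambda)\geq\lambda(1-\lambda)\rho_f(B,t)$, hence $D_f(y,x)\geq\lambda\,\rho_f(B,t)$; letting $\lambda\to1^{-}$ yields $D_f(y,x)\geq\rho_f(B,t)$. Taking the infimum over $y$ with $\|y-x\|=t$ and over $x\in B\cap\mathrm{int\,dom}\,f$ shows $v_f(B,t)\geq\rho_f(B,t)>0$, so $f$ is totally convex on bounded sets.

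For the converse I would use the identity together with a monotonicity property of the modulus of total convexity. First, $v_f(z,\cdot)$ is nondecreasing: given $0<s_1<s_2$ and $y$ with $\|y-z\|=s_2$, the point $y'=z+(s_1/s_2)(y-z)$ satisfies $\|y'-z\|=s_1$ and, by convexity of $w\mapsto D_f(w,z)$ with $D_f(z,z)=0$, $D_f(y',z)\leq(s_1/s_2)D_f(y,z)$, whence $v_f(z,s_1)\leq v_f(z,s_2)$. Now fix $x,y\in B$ with $\|x-y\|=t$; since $\Delta$ is invariant under $(x,\lambda)\leftrightarrow(y,1-\lambda)$ we may assume $\lambda\leq\tfrac12$, so that $\|x-z\|=(1-\lambda)t\geq t/2$, and for a bounded set $B'$ containing the segment $[x,y]$ the identity and the monotonicity give
\begin{equation*}
\frac{\Delta(x,y,\lambda)}{\lambda(1-\lambda)}=\frac{D_f(x,z)}{1-\lambda}+\frac{D_f(y,z)}{\lambda}\geq\frac{v_f\big(z,(1-\lambda)t\big)}{1-\lambda}\geq v_f\!\left(z,\tfrac{t}{2}\right)\geq v_f\!\left(B',\tfrac{t}{2}\right),
\end{equation*}
which is positive by total convexity, so $\rho_f(B,t)>0$. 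The main obstacle is precisely this converse: the factor $\lambda(1-\lambda)$ makes the normalized gap degenerate as $\lambda\to0^{+}$ or $\lambda\to1^{-}$, and it is overcome only by the symmetry reduction to $\lambda\leq\tfrac12$ combined with the monotonicity of $v_f(z,\cdot)$, which isolates the non-degenerate term $D_f(x,z)/(1-\lambda)$ and discards the vanishing one. Lower semicontinuity enters throughout to guarantee that $f$ is finite and continuous on $\mathrm{int\,dom}\,f$, that $z\in\mathrm{int\,dom}\,f$ for the relevant $\lambda$ so that $\nabla f(z)$ is defined, and that the moduli remain controlled after the enlargement from $B$ to $B'$; the hypothesis that $\mathrm{dom}\,f$ contains at least two points (hence, by convexity, a nondegenerate segment) ensures the infima are taken over nonempty families for all small $t>0$, so that the equivalence is not vacuous.
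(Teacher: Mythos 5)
The paper does not actually prove this proposition; it is imported from the cited reference of Butnariu, Iusem and Zalinescu, so there is no in-paper argument to compare against. Your proof is, in substance, the standard one from that source: both implications come from squeezing the normalized convexity gap $\Delta(x,y,\lambda)/[\lambda(1-\lambda)]$ between Bregman distances via the two estimates $\Delta(x,y,\lambda)\leq(1-\lambda)D_f(y,x)$ and $\Delta(x,y,\lambda)\geq\lambda D_f(x,z)+(1-\lambda)D_f(y,z)$, and the genuinely delicate point --- the degeneration of the normalization as $\lambda\to0^{+}$ or $\lambda\to1^{-}$ --- is correctly dispatched by the symmetry reduction to $\lambda\leq\tfrac12$ combined with the monotonicity of $v_f(z,\cdot)$, whose justification via $D_f(y',z)\leq(s_1/s_2)D_f(y,z)$ is right. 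Two small repairs are needed. First, in the forward implication the point $y$ in $v_f(x,t)=\inf\{D_f(y,x):y\in \mathrm{dom}\,f,\ \|y-x\|=t\}$ ranges over all of $\mathrm{dom}\,f$, not over $B$, so the pair $(x,y)$ need not lie in $B$; you must invoke $\rho_f(B_t,t)$ for the $t$-enlargement $B_t$ of $B$ (still bounded), exactly as you do with $B'$ in the converse --- as written, $v_f(B,t)\geq\rho_f(B,t)$ is not justified. Second, the displayed ``identity'' uses $\nabla f(z)$, and lower semicontinuity does not place $z$ in $\mathrm{int\,dom}\,f$ nor make $f$ differentiable there; read with the right-hand derivative $f^{\circ}(z,\cdot)$, which is sublinear rather than linear, the identity weakens to the inequality $\Delta(x,y,\lambda)\geq\lambda D_f(x,z)+(1-\lambda)D_f(y,z)$ --- fortunately exactly the direction you need --- while the one-sided estimate survives by positive homogeneity of $f^{\circ}(x,\cdot)$. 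Neither point changes the architecture of the argument, so I would count this as correct up to routine tightening rather than as having a genuine gap.
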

\begin{lem}\cite{p21}
If $x\in int\;dom f$, then the following statements are equivalent:\\
(i) The function f is totally convex at x.\\
(ii) For any sequence $\{y_n\}\subset\;dom f$,
\begin{equation*}
\displaystyle\lim_{n\rightarrow +\infty}D_f(y_n,x)=0\Rightarrow \displaystyle\lim_{n\rightarrow +\infty}\|y_n-x\|=0.
\end{equation*}
\end{lem}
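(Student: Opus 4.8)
The plan is to prove the two implications separately; the crux is a superadditivity property of the modulus of total convexity, namely that for every $\lambda\geq 1$ and $t>0$,
\[
v_f(x,\lambda t)\geq \lambda\, v_f(x,t),
\]
from which it follows (taking $\lambda=t_2/t_1\geq 1$) that $t\mapsto v_f(x,t)$ is nondecreasing on $(0,+\infty)$. To establish this inequality I would fix any $y\in dom\,f$ with $\|y-x\|=\lambda t$ and set $z:=x+\tfrac{1}{\lambda}(y-x)$, so that $\|z-x\|=t$; since $z=(1-\tfrac1\lambda)x+\tfrac1\lambda y$ and $dom\,f$ is convex, $z\in dom\,f$, so $D_f(z,x)$ is well defined and eligible in the infimum defining $v_f(x,t)$. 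Convexity of $f$ gives $f(z)\leq(1-\tfrac1\lambda)f(x)+\tfrac1\lambda f(y)$, i.e. $f(y)\geq\lambda f(z)-(\lambda-1)f(x)$; substituting this into $D_f(y,x)=f(y)-f(x)-\langle \nabla f(x),y-x\rangle$ and using $y-x=\lambda(z-x)$, a short computation yields $D_f(y,x)\geq\lambda\,D_f(z,x)\geq\lambda\,v_f(x,t)$. Taking the infimum over all admissible $y$ then gives the displayed claim.

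Granting this, the implication (i)$\Rightarrow$(ii) goes by contradiction. Assume $f$ is totally convex at $x$ and let $\{y_n\}\subset dom\,f$ satisfy $D_f(y_n,x)\to 0$ but $\|y_n-x\|\not\to 0$. Passing to a subsequence, there is $\varepsilon>0$ with $\|y_{n_k}-x\|\geq\varepsilon$. Directly from the definition of the modulus one has $D_f(y_{n_k},x)\geq v_f(x,\|y_{n_k}-x\|)$, and the monotonicity just proved upgrades this to $D_f(y_{n_k},x)\geq v_f(x,\varepsilon)$, which is strictly positive by total convexity at $x$. This contradicts $D_f(y_{n_k},x)\to 0$, so in fact $\|y_n-x\|\to 0$.

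For (ii)$\Rightarrow$(i) I would argue the contrapositive, which is direct and does not require the monotonicity step. If $f$ is not totally convex at $x$, then (since $v_f\geq 0$ always) there is $t_0>0$ with $v_f(x,t_0)=0$. By the definition of the infimum, pick $y_n\in dom\,f$ with $\|y_n-x\|=t_0$ and $D_f(y_n,x)<\tfrac1n$; then $D_f(y_n,x)\to 0$ while $\|y_n-x\|=t_0\not\to 0$, so (ii) fails. The main obstacle in the whole argument is the superadditivity inequality of the first paragraph; the only delicate points there are checking $z\in dom\,f$ so that $D_f(z,x)$ enters the infimum, and passing cleanly from the pointwise bound $D_f(y,x)\geq\lambda D_f(z,x)$ to the bound on $v_f$, both handled above.
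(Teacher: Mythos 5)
Your proof is correct. The paper itself gives no proof of this lemma --- it is quoted from the reference \cite{p21} --- so there is nothing internal to compare against; your argument is in fact the standard one from that literature (the superadditivity $v_f(x,\lambda t)\geq \lambda\, v_f(x,t)$ for $\lambda\geq 1$, hence monotonicity of $v_f(x,\cdot)$, is exactly Butnariu--Iusem's Proposition 1.2.2, and the two implications then follow as you describe). All the delicate points are handled: $z=(1-\tfrac1\lambda)x+\tfrac1\lambda y$ lies in $dom\,f$ by convexity so it is admissible in the infimum, the computation $D_f(y,x)\geq\lambda D_f(z,x)$ is a correct application of $f(y)\geq\lambda f(z)-(\lambda-1)f(x)$ together with $y-x=\lambda(z-x)$, and in the converse direction the case $v_f(x,t_0)=0$ indeed forces the defining set to be nonempty (an empty infimum would be $+\infty$), so the sequence $\{y_n\}$ violating (ii) can be extracted.
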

Recall that the function $f$ is called sequentially consistent \cite{p14.5}, if for any two sequences $\{x_n\}_{n\in \mathbb{N}}$ and $\{y_n\}_{n\in \mathbb{N}}$ in $E$ such that $\{x_n\}_{n\in \mathbb{N}}$ is bounded, then
\begin{equation*}
\displaystyle\lim_{n\rightarrow +\infty}D_f(y_n,x_n)=0\Rightarrow \displaystyle\lim_{n\rightarrow +\infty}\|y_n-x_n\|=0.
\end{equation*}
\begin{lem}\label{qpocv} \cite{p11}
If dom f contains at least two points, then the function f is totally convex on bounded sets if and only if the function f is sequentially consistent.
\end{lem}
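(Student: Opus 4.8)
The plan is to prove both implications by contradiction, with the whole argument resting on a single scaling property of the modulus of total convexity. The key preliminary observation, immediate from the definition, is that for any $x\in \mathrm{int}\,\mathrm{dom}\,f$ and any $y\in \mathrm{dom}\,f$ one has $D_f(y,x)\geq v_f(x,\|y-x\|)$, because $v_f(x,\|y-x\|)$ is by definition the infimum of $D_f(z,x)$ over all $z\in\mathrm{dom}\,f$ with $\|z-x\|=\|y-x\|$.

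The real engine of the proof is the inequality $v_f(x,ct)\geq c\,v_f(x,t)$ for every $c\geq 1$. First I would establish it by exploiting that the map $z\mapsto D_f(z,x)=f(z)-f(x)-\langle \nabla f(x),z-x\rangle$ is convex in its first argument (the affine term does not affect convexity) and vanishes at $z=x$. Given $y$ with $\|y-x\|=ct$, set $z=x+\tfrac{1}{c}(y-x)$, so that $\|z-x\|=t$; convexity together with $D_f(x,x)=0$ yields $D_f(z,x)\leq \tfrac{1}{c}D_f(y,x)$, i.e. $D_f(y,x)\geq c\,D_f(z,x)\geq c\,v_f(x,t)$. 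Taking the infimum over all such $y$ gives the claimed scaling inequality.

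For the direction ``totally convex on bounded sets $\Rightarrow$ sequentially consistent'', suppose $\{x_n\}$ is bounded, $\{y_n\}\subset \mathrm{dom}\,f$, and $D_f(y_n,x_n)\to 0$, but $\|y_n-x_n\|\not\to 0$. Passing to a subsequence I may assume $\|y_n-x_n\|\geq \eps>0$ for all $n$, and enclose $\{x_n\}$ in a bounded set $B$. Applying the two inequalities above with $c=\|y_n-x_n\|/\eps\geq 1$ and $t=\eps$ gives $D_f(y_n,x_n)\geq v_f(x_n,\|y_n-x_n\|)\geq \tfrac{\|y_n-x_n\|}{\eps}v_f(x_n,\eps)\geq v_f(x_n,\eps)\geq v_f(B,\eps)$, where the penultimate step uses $\|y_n-x_n\|/\eps\geq 1$ and $v_f(x_n,\eps)\geq 0$. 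Total convexity on bounded sets makes $v_f(B,\eps)>0$, contradicting $D_f(y_n,x_n)\to 0$.

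For the converse, suppose $f$ is sequentially consistent but fails to be totally convex on some bounded set $B$: then $v_f(B,t_0)=0$ for some $t_0>0$ (it cannot be negative since $D_f\geq 0$). Unwinding the two nested infima defining $v_f(B,t_0)$ lets me select sequences $\{x_n\}\subset B\cap \mathrm{int}\,\mathrm{dom}\,f$ and $\{y_n\}\subset \mathrm{dom}\,f$ with $\|y_n-x_n\|=t_0$ and $D_f(y_n,x_n)\to 0$. Since $\{x_n\}$ is bounded, sequential consistency forces $\|y_n-x_n\|\to 0$, contradicting $\|y_n-x_n\|=t_0>0$. I expect the only delicate points to be the clean derivation of the scaling inequality and the careful bookkeeping of the two infima in the converse; the remainder is a routine contradiction argument.
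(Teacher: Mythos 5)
Your proof is correct. Note that the paper itself gives no proof of this lemma --- it is quoted verbatim from Butnariu--Iusem \cite{p11} --- and your argument is essentially the classical one from that source: the scaling inequality $v_f(x,ct)\geq c\,v_f(x,t)$ for $c\geq 1$, obtained from convexity of $z\mapsto D_f(z,x)$ together with $D_f(x,x)=0$, is exactly the property that makes the forward implication work, and the converse is the routine unwinding of the two infima that you describe. The only points worth making explicit in a written version are that the auxiliary point $z=x+\tfrac{1}{c}(y-x)$ lies in $\operatorname{dom}f$ (it does, as a convex combination of points of the convex set $\operatorname{dom}f$), and that in the converse direction the vanishing of $v_f(B,t_0)$ forces $v_f(x_n,t_0)$ to be finite for the chosen $x_n$, so the inner infimum is over a nonempty set and the $y_n$ with $\|y_n-x_n\|=t_0$ can indeed be selected; you have implicitly handled both.
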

\begin{lem}\cite{p22}\label{qwpo}
Let $f:E\rightarrow \mathbb{R}$ be a $G\hat{a}teaux$ differentiable and totally convex function. If $x_1\in E$ and the sequence $\{D_f(x_n,x_1)\}$ is bounded, then the sequence $\{x_n\}$ is also bounded.
\end{lem}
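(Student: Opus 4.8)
The plan is to argue by contradiction, using the modulus of total convexity $v_f(x_1,\cdot)$ as the bridge between the hypothesis and the conclusion. The starting observation is that, directly from the definition of $v_f$ as an infimum over the sphere of radius $\|x_n-x_1\|$ centred at $x_1$ (and since $\mathrm{dom}\,f=E$, every $x_n$ is an admissible competitor), one has the pointwise lower bound $D_f(x_n,x_1)\geq v_f(x_1,\|x_n-x_1\|)$. So if I can show that $v_f(x_1,t)\to+\infty$ as $t\to+\infty$, then an unbounded $\{x_n\}$ would force $\{D_f(x_n,x_1)\}$ to be unbounded, contradicting the assumption.

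The key auxiliary fact I would establish first is the scaling inequality for the modulus: for every $t\geq 0$ and every $c\geq 1$,
\[
v_f(x_1,ct)\geq c\,v_f(x_1,t).
\]
To prove it, fix $y$ with $\|y-x_1\|=ct$ and set $w=x_1+\tfrac1c(y-x_1)=(1-\tfrac1c)x_1+\tfrac1c\,y$, so that $\|w-x_1\|=t$. Since $f$ is convex, the map $y\mapsto D_f(y,x_1)$ is convex and vanishes at $y=x_1$; hence $D_f(w,x_1)\leq\tfrac1c\,D_f(y,x_1)$, that is, $D_f(y,x_1)\geq c\,D_f(w,x_1)\geq c\,v_f(x_1,t)$. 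Taking the infimum over all such $y$ yields the inequality.

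With the scaling inequality available, the contradiction is quick. If $\{x_n\}$ were unbounded, I would pass to a subsequence $\{x_{n_k}\}$ with $t_k:=\|x_{n_k}-x_1\|\to+\infty$, so that $t_k\geq 1$ for all large $k$. Applying the scaling inequality with $c=t_k$ and $t=1$ gives
\[
D_f(x_{n_k},x_1)\geq v_f(x_1,t_k)\geq t_k\,v_f(x_1,1),
\]
and since $f$ is totally convex at $x_1$ we have $v_f(x_1,1)>0$, so the right-hand side diverges; this contradicts boundedness of $\{D_f(x_n,x_1)\}$, and therefore $\{x_n\}$ is bounded.

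The only genuinely substantive step is the scaling inequality, and even that reduces to the convexity of $D_f(\cdot,x_1)$ together with $D_f(x_1,x_1)=0$; the remainder is a routine contradiction. I expect the one point needing care to be the legitimacy of taking $c=t_k$ in the scaling property, which is precisely why I first restrict attention to the tail of the subsequence where $t_k\geq 1$.
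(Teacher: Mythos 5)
Your proof is correct. The paper does not actually prove this lemma --- it is quoted from Reich and Sabach \cite{p22} --- but your argument is essentially the standard one behind the cited result: the scaling property $v_f(x_1,ct)\geq c\,v_f(x_1,t)$ for $c\geq 1$ (which you correctly rederive from convexity of $D_f(\cdot,x_1)$ and $D_f(x_1,x_1)=0$), together with $v_f(x_1,1)>0$ from total convexity at $x_1$, forces $D_f(x_n,x_1)\to\infty$ along any unbounded subsequence, contradicting the hypothesis.
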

\begin{lem}\cite{p24}\label{fdyuq}
Let $f:E\rightarrow \mathbb{R}$ be a Legendre function such that $\nabla f^*$ is bounded on bounded subsets of $int\,dom f^*$. Let $x_1\in E$ and if $\{D_f(x_1,x_n)\}$ is bounded, then the sequence $\{x_n\}$ is bounded too.
\end{lem}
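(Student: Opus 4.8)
\emph{Proof idea.} The natural strategy is to transport the problem to the dual space via the conjugate function $f^*$. Since $\{x_n\}$ is recovered from $\{\nabla f(x_n)\}$ through the inverse gradient $\nabla f^*=(\nabla f)^{-1}$, and $\nabla f^*$ is assumed to be bounded on bounded subsets of $int\,dom f^*$, it suffices to prove that the dual sequence $\{\nabla f(x_n)\}\subset E^*$ is bounded. The whole argument therefore reduces to showing that the hypothesis controls $\{\nabla f(x_n)\}$, after which the conclusion about $\{x_n\}$ follows at once by applying $\nabla f^*$.

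First I would establish the duality identity
\[
D_f(x_1,x_n)=D_{f^*}\bigl(\nabla f(x_n),\nabla f(x_1)\bigr).
\]
This is a direct computation: expanding the right-hand side and using the Fenchel--Young equality $f(x_n)+f^*(\nabla f(x_n))=\langle x_n,\nabla f(x_n)\rangle$ together with $\nabla f^*(\nabla f(x_1))=x_1$ (valid because $f$ is Legendre, so that $\nabla f^*=(\nabla f)^{-1}$), every term involving $f(x_n)$ cancels and one is left precisely with $f(x_1)-f(x_n)-\langle\nabla f(x_n),x_1-x_n\rangle=D_f(x_1,x_n)$. Consequently boundedness of $\{D_f(x_1,x_n)\}$ is the same as boundedness of $\{D_{f^*}(\nabla f(x_n),\nabla f(x_1))\}$, and writing $\xi_n=\nabla f(x_n)$, $\xi_1=\nabla f(x_1)$, $x_1=\nabla f^*(\xi_1)$, this says that $f^*(\xi_n)-\langle x_1,\xi_n\rangle$ is bounded above by a constant independent of $n$.

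Next I would show that $\{\xi_n\}$ is bounded in $E^*$. Here the hypothesis $f:E\to\mathbb{R}$ is decisive: a finite, convex, lower semicontinuous function on the Banach space $E$ has $dom f=E$ and is continuous, hence locally bounded and in particular bounded on bounded subsets of $E$. A standard conjugacy argument then forces $f^*$ to be super-coercive, i.e. $f^*(\xi)/\|\xi\|\to+\infty$ as $\|\xi\|\to\infty$; indeed, testing the supremum defining $f^*(\xi)$ at the point $R\,\xi/\|\xi\|$ gives $f^*(\xi)\ge R\|\xi\|-\sup_{\|x\|=R}f(x)$ for every $R>0$. Super-coercivity makes the convex function $\xi\mapsto f^*(\xi)-\langle x_1,\xi\rangle$ coercive, so its sublevel sets are bounded; since each $\xi_n$ lies in one such sublevel set, $\{\xi_n\}=\{\nabla f(x_n)\}$ is bounded.

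Finally, the $\xi_n$ belong to $ran\,\nabla f=int\,dom f^*$, so $\{\xi_n\}$ is a bounded subset of $int\,dom f^*$; by hypothesis $\nabla f^*$ is bounded on such sets, and since $x_n=\nabla f^*(\xi_n)$, the sequence $\{x_n\}$ is bounded, as required. I expect the one genuinely delicate point to be the second step: recognising that finiteness of $f$ (not merely the Legendre property) is exactly what yields super-coercivity of $f^*$ and hence the boundedness of the dual sequence. This cannot be dropped---if $int\,dom f^*$ were a bounded set on which $\nabla f^*$ blows up near the boundary the conclusion would fail---whereas the boundedness-of-$\nabla f^*$ hypothesis is precisely what legitimises the final pull-back. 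As an alternative to the super-coercivity argument one could instead invoke Lemma \ref{qwpo} applied to $f^*$ (which is again Legendre, hence G\^ateaux differentiable) to pass directly from boundedness of $\{D_{f^*}(\xi_n,\xi_1)\}$ to boundedness of $\{\xi_n\}$, at the cost of having to verify that $f^*$ is totally convex.
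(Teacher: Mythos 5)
The paper itself gives no proof of this lemma: it is imported verbatim from Sabach \cite{p24}, so there is no in-paper argument to measure yours against. Your overall strategy --- rewrite $D_f(x_1,x_n)=f(x_1)+f^*(\nabla f(x_n))-\langle \nabla f(x_n),x_1\rangle$ via Fenchel--Young, deduce that $\{\nabla f(x_n)\}$ lies in a sublevel set of $\xi\mapsto f^*(\xi)-\langle x_1,\xi\rangle$, show that this sublevel set is bounded, and then pull back through $\nabla f^*$ using the hypothesis that it is bounded on bounded subsets of $int\,dom f^*$ --- is the standard route and matches the cited source; your first and third steps are correct.

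The second step as written, however, contains a genuine error. You claim that a finite, continuous convex function on a Banach space is ``locally bounded and in particular bounded on bounded subsets of $E$'', and you then test the supremum defining $f^*(\xi)$ over the sphere $\|x\|=R$. In infinite dimensions local boundedness does \emph{not} imply boundedness on bounded sets: on $c_0$ the function $f(x)=\sum_n x_n^{2n}$ is convex, everywhere finite and continuous, yet $f(2e_N)=4^N\to\infty$ on the ball of radius $2$. Hence $\sup_{\|x\|=R}f(x)$ may equal $+\infty$, your lower bound $f^*(\xi)\geq R\|\xi\|-\sup_{\|x\|=R}f(x)$ becomes vacuous, and super-coercivity of $f^*$ cannot be obtained this way (it is in fact \emph{equivalent} to $f$ being bounded on bounded sets, which is not assumed here). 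The repair is cheap because you never needed super-coercivity, only coercivity of $\psi(\xi)=f^*(\xi)-\langle x_1,\xi\rangle$: since $f$ is a finite, lower semicontinuous convex function on a Banach space, it is continuous at $x_1$, so $f\leq\beta$ on some ball $\|x-x_1\|\leq\delta$; testing the supremum over that ball gives $f^*(\xi)\geq\langle \xi,x_1\rangle+\delta\|\xi\|-\beta$, whence $\psi(\xi)\geq\delta\|\xi\|-\beta$ and the relevant sublevel set is bounded. With that substitution your argument is complete. Your closing alternative via Lemma \ref{qwpo} applied to $f^*$ should be dropped, since total convexity of $f^*$ is not among the hypotheses and would be an extra, unverified assumption.
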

Recall that the Bregman projection \cite{p8} with respect to $f$ of $x\in int\,dom f$ onto a nonempty, closed and convex set $C\subseteq int\,dom f$ is the unique vector $proj_C^f(x)\in C$ satisfying
\begin{equation*}
D_f\big(proj_C^f(x),x\big)=inf\{D_f(y,x):y\in C\}.
\end{equation*}
Similar to the metric projection in Hilbert spaces, the Bregman projection with respect to totally convex and G$\hat{\text{a}}$teaux differentiable functions has a variational characterization \cite[corollary 4.4, p. 23]{p12}.
\begin{lem}\label{rtmnw}\cite{p13}
Suppose that f is $G\hat{a}teaux$ differentiable and totally convex on $int\,dom f$. Let $x\in int\,dom f$ and $C\subseteq int\,dom f$ be a nonempty, closed and convex set. Then the following Bregamn projection conditions are equivalent:\\
(i) $z_0=proj_C^f(x)$.\\
(ii) $z=z_0$ is the unique solution of the following variational inequality:
\begin{equation*}
\langle z-y,\nabla f(x)-\nabla f(z)\rangle \geq0,\quad \forall y\in C.
\end{equation*}
(iii) $z=z_0$ is the unique solution of the following variational inequality:
\begin{equation*}
D_f(y,z)+D_f(z,x)\leq D_f(y,x),\quad \forall y\in C.
\end{equation*}
\end{lem}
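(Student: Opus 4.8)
The plan is to establish the cyclic chain (i) $\Rightarrow$ (ii) $\Rightarrow$ (iii) $\Rightarrow$ (i) and then read off the uniqueness assertions from the uniqueness of the Bregman projection itself. The structural facts I would rely on are minimal: first, that the map $y\mapsto D_f(y,x)=f(y)-f(x)-\langle y-x,\nabla f(x)\rangle$ is convex in $y$ (the subtracted term being affine) and, by total convexity of $f$, in fact strictly convex, so that a minimizer of $D_f(\cdot,x)$ over the closed convex set $C$, when it exists, is unique; second, the three point identity \eqref{awpon}; and third, the elementary first--order optimality condition for a convex function restricted to a segment.

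For (i) $\Rightarrow$ (ii), assume $z_0=proj_C^f(x)$, so that $z_0$ minimizes $y\mapsto D_f(y,x)$ over $C$. Fixing $y\in C$ and using convexity of $C$, the points $z_t:=(1-t)z_0+ty$ lie in $C$ for $t\in[0,1]$, and the scalar function $\varphi(t):=D_f(z_t,x)$ is convex and minimized at $t=0$. I would then compute, invoking the G\^ateaux differentiability of $f$, the one--sided derivative $\varphi'(0^+)=\langle y-z_0,\nabla f(z_0)-\nabla f(x)\rangle$; minimality at the left endpoint forces $\varphi'(0^+)\ge 0$, and rewriting this as $\langle z_0-y,\nabla f(x)-\nabla f(z_0)\rangle\ge 0$ for all $y\in C$ is exactly the variational inequality in (ii).

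The remaining two implications are algebraic. Applying the three point identity \eqref{awpon} at the triple $(y,z_0,x)$ gives
\[
D_f(y,z_0)+D_f(z_0,x)-D_f(y,x)=\langle y-z_0,\nabla f(x)-\nabla f(z_0)\rangle=-\langle z_0-y,\nabla f(x)-\nabla f(z_0)\rangle,
\]
so the inequality in (ii) holds for all $y\in C$ if and only if $D_f(y,z_0)+D_f(z_0,x)\le D_f(y,x)$ for all $y\in C$, which is (iii); this settles (ii) $\Rightarrow$ (iii). For (iii) $\Rightarrow$ (i), I would discard the nonnegative summand $D_f(y,z_0)\ge 0$ to obtain $D_f(z_0,x)\le D_f(y,x)$ for every $y\in C$, so that $z_0$ minimizes $D_f(\cdot,x)$ over $C$ and hence $z_0=proj_C^f(x)$.

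For the uniqueness clauses, the displayed identity shows that any $z\in C$ satisfying either the variational inequality of (ii) or the inequality of (iii) must minimize $D_f(\cdot,x)$ over $C$; strict convexity of $D_f(\cdot,x)$ (coming from total convexity of $f$) makes this minimizer unique, so in each case the solution is forced to equal $proj_C^f(x)$. The step I expect to demand the most care is the derivative computation in (i) $\Rightarrow$ (ii): one must justify that the right derivative of the convex function $\varphi$ exists and equals $\langle y-z_0,\nabla f(z_0)-\nabla f(x)\rangle$ using only G\^ateaux differentiability of $f$, and that optimality at the endpoint $t=0$ delivers the correct sign. Once this is in hand, the rest is a direct application of \eqref{awpon} together with the nonnegativity of $D_f$.
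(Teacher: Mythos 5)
The paper states this lemma without proof, citing Butnariu--Resmerita \cite{p13}, so there is no in-paper argument to compare against; judged on its own, your proof is correct and is essentially the standard one: the endpoint optimality condition for the convex function $t\mapsto D_f((1-t)z_0+ty,\,x)$ gives (i)\,$\Rightarrow$\,(ii), the three point identity \eqref{awpon} makes (ii) and (iii) literally equivalent, and dropping $D_f(y,z_0)\ge 0$ closes the cycle. The only step worth a sentence of justification is your claim that total convexity yields strict convexity of $D_f(\cdot,x)$ (hence uniqueness): this follows because total convexity at a point $m$ forces $D_f(u,m)>0$ for $u\neq m$, which rules out $f$ being affine on any segment through $m$.
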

Let $E$ be a real Banach space and $C$ be a nonempty subset of $E$. An element $p\in C$ is called a fixed point of a single-valued mapping $T:C\rightarrow C$, if $p=Tp$. The set of fixed points of $T$ is denoted by $F(T)$.

A point $x\in C$ is called an asymptotic fixed point of $T$ if $C$ contains a sequence $\{x_n\}$ which converges weakly to $x$ and $\displaystyle\lim_{n\rightarrow +\infty}\|x_n-Tx_n\|=0$. We denote the asymptotic fixed points of $T$ by $\tilde{F}(T)$.

Let $C$ be a nonempty, closed and convex subset of \textit{int\,dom} $f$ and $T:C\rightarrow C$ be a mapping. Now $T$ is said to be Bregman quasi-nonexpansive, if $F(T)\neq \emptyset$ and
\begin{equation*}
D_f(p,Tx)\leq D_f(p,x),\quad \forall x\in C,p\in F(T).
\end{equation*}

Let $C$ be a nonempty, closed and convex subset of \textit{int dom} $f$. An operator $T:C\rightarrow int\,dom f$ is said to be Bregman strongly nonexpansive with respect to a nonempty $\tilde{F}(T)$, if
\begin{equation*}
D_f(y,Tx)\leq D_f(y,x),\quad \forall x\in C,y\in \tilde{F}(T),
\end{equation*}
and for any bounded sequence $\{x_n\}\subseteq C$ with
\begin{equation*}
\displaystyle\lim_{n\rightarrow \infty}\big(D_f(y,x_n)-D_f(y,Tx_n)\big)=0,
\end{equation*}
it follows that
\begin{equation*}
\displaystyle\lim_{n\rightarrow \infty}D_f(Tx_n,x_n)=0.
\end{equation*}

A mapping $B:E\rightarrow2^{E^*}$ is called Bregman inverse strongly monotone on the set $C$, if $C\cap (int\,dom f)\neq \emptyset$ and for any $x,y\in C\cap (int\,dom f),\xi\in Bx$ and $\eta \in By$, we have that
\begin{equation*}
\big\langle \xi-\eta,\;\nabla f^*(\nabla f(x)-\xi)-\nabla f^*(\nabla f(y)-\eta)\big\rangle \geq0.
\end{equation*}

Let $B:E\rightarrow 2^{E^*}$ be a mapping. Then the mapping defined by
\begin{equation*}
B_\lambda ^f:=\nabla f^*\circ(\nabla f-\lambda B):E\rightarrow E
\end{equation*}
is called an anti-resolvent associated with $B$ and $\lambda$ for any $\lambda>0$.

Suppose that $A$ is a mapping of $E$ into $2^{E^{*}}$ for the real reflexive Banach space $E$. The effective domain of $A$ is denoted by $dom(A)$, that is, $dom(A)=\lbrace x\in E: Ax\neq \emptyset \rbrace$. A multi-valued mapping $A$ on $E$ is said to be monotone if $\langle x-y, u^{*}-v^{*}\rangle\geq 0 $ for all $ x,y \in dom(A), u^{*} \in Ax$ and $v^{*}\in Ay$. A monotone operator $A$ on $E$ is said to be maximal if graph $A$, the graph of $A$, is not a proper subset of the graph of any monotone operator on $E$.

Let $E$ be a real reflexive Banach space, $f:E\rightarrow (-\infty,+\infty]$ be a uniformly Fr$\acute{\text{e}}$chet differentiable and bounded on bounded subsets of $E$, then for any $\lambda >0$ the resolvent of $A$   defined by
\begin{equation*}
Res_A^f(x)=(\nabla f+\lambda A)^{-1}\circ \nabla f(x).
\end{equation*}
is a single-valued Bregman quasi-nonexpansive mapping from $E$ onto $dom(A)$ and $F(Res_A^f)=A^{-1}0$. We denote by $A_\lambda=\frac{1}{\lambda}(\nabla f-\nabla f(Res_A^f))$ the Yosida approximation of $A$ for any $\lambda>0$. We get from \cite[ prop 2.7, p.10]{p22} that
\begin{equation*}
A_\lambda(x)\in A\big(Res_A^f(x)\big),\:\forall x\in E,\;\lambda>0,
\end{equation*}
 (see \cite{p21}, too).
\begin{lem}\label{qctas}\cite{p23}
Let $E$ be a real reflexive Banach space and $f:E\rightarrow(-\infty,+\infty]$ be a Legendre function which is totally convex on bounded subsets of $E$. Also let $C$ be a nonempty closed and convex subset of $int\,dom f$ and $T:C\rightarrow 2^C$ be a multi valued Bregman quasi-nonexpansive mapping. Then the fixed point set $F(T)$ of $T$ is a closed and convex subset of $C$.
\end{lem}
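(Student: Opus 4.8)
The plan is to prove the two properties separately, using throughout the defining inequality of a multivalued Bregman quasi-nonexpansive map, namely $D_f(p,z)\le D_f(p,x)$ for every $x\in C$, every $p\in F(T)$ and every $z\in Tx$, together with two structural facts: that $f$ is Legendre, so that $D_f(a,b)=0$ forces $a=b$, and that $f$ is totally convex on bounded sets, so that the point characterization $D_f(y_n,u)\to 0\Rightarrow\|y_n-u\|\to0$ is available at each $u\in int\,dom f$.

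For closedness I would take a sequence $\{p_n\}\subset F(T)$ with $p_n\to p$; since $C$ is closed, $p\in C\subseteq int\,dom f$. Fixing any $u\in Tp$ (the values of $T$ being nonempty), the quasi-nonexpansive inequality applied with the fixed points $p_n$ and $x=p$ gives $D_f(p_n,u)\le D_f(p_n,p)$. Because $f$ is a proper lower semicontinuous convex function it is continuous on $int\,dom f$, so $D_f(p_n,p)=f(p_n)-f(p)-\langle\nabla f(p),p_n-p\rangle\to D_f(p,p)=0$, whence $D_f(p_n,u)\to0$. Total convexity of $f$ at $u$ then yields $p_n\to u$, and uniqueness of strong limits forces $u=p$. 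As $u\in Tp$ was arbitrary this shows $Tp=\{p\}$, hence $p\in F(T)$ and $F(T)$ is closed.

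For convexity I would fix $p,q\in F(T)$ and $t\in[0,1]$, set $w=tp+(1-t)q\in C\subseteq int\,dom f$, and take any $u\in Tw$. The key step is to form the $t$-weighted combination of the two inequalities $D_f(p,u)\le D_f(p,w)$ and $D_f(q,u)\le D_f(q,w)$. Expanding each Bregman distance through its definition and using $tp+(1-t)q=w$, the terms $tf(p)+(1-t)f(q)$ cancel on both sides; on the $w$-side the gradient term $\langle\nabla f(w),w-w\rangle$ vanishes, while on the $u$-side the gradient contributions combine into $-\langle\nabla f(u),w-u\rangle$. Thus $t D_f(p,u)+(1-t)D_f(q,u)-t D_f(p,w)-(1-t)D_f(q,w)$ reduces exactly to $f(w)-f(u)-\langle\nabla f(u),w-u\rangle=D_f(w,u)$. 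Since the left-hand side is $\le 0$ by the two inequalities, we get $D_f(w,u)\le0$; as $D_f$ is nonnegative this gives $D_f(w,u)=0$, and the Legendre property forces $u=w$. Hence $Tw=\{w\}$, so $w\in F(T)$ and $F(T)$ is convex.

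The closedness argument and the raw expansion of the distances are routine; the one step carrying real content is the cancellation in the convexity computation, where the non-symmetry of $D_f$ must be tracked carefully. The residual term has to come out as $D_f(w,u)$ and not $D_f(u,w)$: it is precisely this ordering, with $u$ sitting in the slot where $\nabla f$ is evaluated, that matches the combination $-\langle\nabla f(u),w-u\rangle$ produced by the weighting and allows the Legendre identity $D_f(w,u)=0\Rightarrow w=u$ to close the argument.
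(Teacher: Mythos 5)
The paper states this lemma without proof, citing Reich and Sabach \cite{p23}, so there is no in-text argument to compare against. Your proof is correct and is essentially the standard one for multivalued Bregman quasi-nonexpansive operators: closedness via $D_f(p_n,u)\le D_f(p_n,p)\to 0$ together with total convexity (sequential consistency) to force $p_n\to u$, and convexity via the $t$-weighted combination of the two defining inequalities collapsing exactly to $D_f(w,u)\le 0$, with the Legendre property supplying $D_f(w,u)=0\Rightarrow w=u$. The only inputs you use beyond the statement are the standing conventions that $T$ has nonempty values and that a proper lower semicontinuous convex function on a Banach space is continuous on the interior of its domain; both are legitimate, and your tracking of the asymmetry of $D_f$ (the residual term coming out as $D_f(w,u)$ with $u$ in the gradient slot, not $D_f(u,w)$) is exactly right.
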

\begin{lem}\label{ewca}\cite{p16}
Assume that $f:E\rightarrow \mathbb{R}$ is a Legendre function which is uniformly $Fr\acute{e}chet$ differentiable and bounded on bounded subsets of $E$. Let $C$ be a nonempty closed and convex subset of $E$. Also let $\{T_i:i=1, ..., N\}$ be $N$ Bregman strongly nonexpansive mapping which satisfy $\tilde{F}(T_i)=F(T_i)$ for each $1\leq i\leq N$ and let $T=T_NT_{N-1}...T_1$. If $F(T)$ and $\bigcap_{i=1}^NF(T_i)$ are nonempty, then $T$ is also Bregman strongly nonexpansive with $F(T)=\tilde{F}(T)$.
\end{lem}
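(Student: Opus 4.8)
The plan is to prove the lemma by first showing that all three relevant sets coincide, $F(T)=\bigcap_{i=1}^N F(T_i)=\tilde F(T)$, and then verifying the two defining conditions of a Bregman strongly nonexpansive operator for $T$ relative to this common set. Throughout I would write $S_0=\mathrm{Id}$ and $S_i=T_iS_{i-1}$, so that $S_N=T$, and fix a point $w\in\bigcap_{i=1}^N F(T_i)$ (nonempty by hypothesis). The quasi-nonexpansive inequality for $T$ is immediate: for $w$ as above and any $x\in C$, chaining the inequalities $D_f(w,S_i x)=D_f(w,T_iS_{i-1}x)\le D_f(w,S_{i-1}x)$ down from $i=N$ to $i=1$ yields $D_f(w,Tx)\le D_f(w,x)$. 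The inclusion $\bigcap_i F(T_i)\subseteq F(T)$ is trivial.

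The engine of the whole argument is a telescoping decomposition, which I would isolate as a claim: if $\{u_n\}\subset C$ is bounded and $D_f(w,u_n)-D_f(w,Tu_n)\to 0$, then $\|Tu_n-u_n\|\to 0$ and $D_f(Tu_n,u_n)\to 0$. To see this, write $D_f(w,u_n)-D_f(w,Tu_n)=\sum_{i=1}^N\big(D_f(w,S_{i-1}u_n)-D_f(w,S_iu_n)\big)$. Each summand is nonnegative by the quasi-nonexpansivity of $T_i$ at $w$, so a finite sum of nonnegative terms tending to $0$ forces every summand to $0$; here I would note that each intermediate sequence $\{S_{i-1}u_n\}$ is bounded because $\{D_f(w,S_{i-1}u_n)\}$ is bounded, using Lemma \ref{qwpo} (or Lemma \ref{fdyuq}). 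Applying the second (SN) condition in the definition of Bregman strongly nonexpansive to $T_i$ on the bounded sequence $\{S_{i-1}u_n\}$ gives $D_f(S_iu_n,S_{i-1}u_n)\to 0$; sequential consistency of $f$ (Lemma \ref{qpocv}) then upgrades this to $\|S_iu_n-S_{i-1}u_n\|\to 0$ for each $i$, whence $\|Tu_n-u_n\|\le\sum_i\|S_iu_n-S_{i-1}u_n\|\to 0$. Finally $D_f(Tu_n,u_n)\to 0$ follows from $\|Tu_n-u_n\|\to 0$, boundedness, and the uniform continuity of $f$ and $\nabla f$ on bounded sets (Lemma \ref{zakhv}).

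With this engine in hand I would identify the fixed-point sets. For $F(T)\subseteq\bigcap_i F(T_i)$, take $x\in F(T)$; then $Tx=x$ forces equality throughout the chain above, i.e. $D_f(w,S_{i-1}x)-D_f(w,S_ix)=0$ for every $i$. Feeding the constant sequence $u_n\equiv S_{i-1}x$ into the SN condition of $T_i$ gives $D_f(S_ix,S_{i-1}x)=0$, and since $f$ is Legendre this yields $S_ix=S_{i-1}x$; inductively $T_ix=x$ for all $i$, so $x\in\bigcap_i F(T_i)$. For the harder inclusion $\tilde F(T)\subseteq\bigcap_i F(T_i)$, take $x\in\tilde F(T)$, so there is $\{z_n\}$ with $z_n\rightharpoonup x$ and $\|z_n-Tz_n\|\to 0$. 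Using the three-point identity \eqref{awpon} I would write $D_f(w,z_n)-D_f(w,Tz_n)=D_f(Tz_n,z_n)-\langle w-Tz_n,\nabla f(z_n)-\nabla f(Tz_n)\rangle$, and both terms on the right tend to $0$ by $\|z_n-Tz_n\|\to 0$ and Lemma \ref{zakhv}. The engine then gives $\|S_iz_n-S_{i-1}z_n\|\to 0$ for each $i$; consequently $S_{i-1}z_n\rightharpoonup x$ (the strong perturbation $S_{i-1}z_n-z_n\to 0$ does not disturb the weak limit) and $\|T_iS_{i-1}z_n-S_{i-1}z_n\|\to 0$, so $x\in\tilde F(T_i)=F(T_i)$ by hypothesis, for every $i$. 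Hence $\tilde F(T)\subseteq\bigcap_iF(T_i)\subseteq F(T)\subseteq\tilde F(T)$, and all three sets coincide.

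To finish, I would reread the two conditions of Bregman strong nonexpansivity for $T$ relative to $\tilde F(T)=\bigcap_iF(T_i)$: the first inequality is the quasi-nonexpansive chaining already proved, and the second is exactly the engine applied to an arbitrary bounded sequence $\{x_n\}$ with $D_f(y,x_n)-D_f(y,Tx_n)\to 0$ for $y\in\tilde F(T)$, yielding $D_f(Tx_n,x_n)\to 0$. I expect the main obstacle to be the inclusion $\tilde F(T)\subseteq\bigcap_iF(T_i)$: this is where one must genuinely exploit the SN condition of each factor together with the hypothesis $\tilde F(T_i)=F(T_i)$, since quasi-nonexpansive chaining alone only controls $D_f(\cdot,S_ix_n)$ and cannot separate the composition into its factors. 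The technical care needed there (boundedness of the intermediate iterates via Lemma \ref{qwpo}/\ref{fdyuq}, the norm-to-Bregman and Bregman-to-norm conversions via Lemmas \ref{zakhv} and \ref{qpocv}, and preservation of the weak limit under strong perturbations) is the part that requires attention, but each piece is supplied by the preliminaries.
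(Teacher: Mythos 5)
The paper offers no proof of this lemma --- it is quoted verbatim from \cite{p16} --- so there is no internal argument to compare against; your proposal has to stand on its own. Its architecture is the standard one for compositions of Bregman strongly nonexpansive operators and is essentially sound: the telescoping decomposition $D_f(w,u_n)-D_f(w,Tu_n)=\sum_{i=1}^N\big(D_f(w,S_{i-1}u_n)-D_f(w,S_iu_n)\big)$ into nonnegative summands, the factor-by-factor application of the second (SN) condition, the chain of inclusions $\tilde F(T)\subseteq\bigcap_{i}F(T_i)\subseteq F(T)\subseteq\tilde F(T)$ (the last via constant sequences, which also makes your separate argument for $F(T)\subseteq\bigcap_i F(T_i)$ redundant but harmless), and the three-point-identity computation showing that $z_n\rightharpoonup x$ with $\|z_n-Tz_n\|\to 0$ forces $D_f(w,z_n)-D_f(w,Tz_n)\to 0$ are all correct, granting the preliminaries you cite.

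The one genuine concern is that two of those preliminaries are not covered by the hypotheses of the lemma as stated. Lemma \ref{qpocv} (sequential consistency, which you use to pass from $D_f(S_iu_n,S_{i-1}u_n)\to 0$ to $\|S_iu_n-S_{i-1}u_n\|\to 0$) requires $f$ to be totally convex on bounded sets, and Lemma \ref{fdyuq} (which you need for boundedness of the intermediate sequences, since the relevant quantity is $D_f(w,S_iu_n)$ with the \emph{second} argument varying, so Lemma \ref{qwpo} does not apply) requires $\nabla f^*$ to be bounded on bounded subsets of $int\,dom f^*$. Neither property follows from the stated assumptions that $f$ is Legendre, uniformly Fr\'{e}chet differentiable and bounded on bounded subsets of $E$. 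The norm detour is not easily avoided either: $D_f$ satisfies no triangle inequality, so one cannot chain $D_f(S_iu_n,S_{i-1}u_n)\to 0$ into $D_f(Tu_n,u_n)\to 0$ through the three-point identity without first controlling $\|S_iu_n-S_{i-1}u_n\|$ in the cross terms. You should therefore either add total convexity on bounded sets (and a coercivity-type condition guaranteeing boundedness of $\nabla f^*$ on bounded sets) to the hypotheses, or verify that \cite{p16} assumes them; both hold for the function $f$ of Theorem \ref{asli}, so nothing breaks in the paper's application, but as written your proof establishes the lemma only under these extra assumptions.
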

\begin{lem}\label{vtye}\cite{p20}
Let $G:E\rightarrow 2^{E^*}$ be a maximal monotone operator and $B:E\rightarrow E^*$ be a Bregman inverse strongly monotone mapping such that $(G+B)^{-1}(0^*)\neq \emptyset$. Also let $f:E\rightarrow \mathbb{R}$ be a Legendre function which is uniformly $Fr\acute{e}chet$ differentiable and bounded on bounded subset of $E$. Then
\begin{itemize}
  \item [{\rm(i)}] $(G+B)^{-1}(0^*)=F(Res_{\lambda G}^f\circ B_\lambda ^f)$.
  \item [{\rm(ii)}]  $Res_{\lambda G}^f\circ B_\lambda ^f$ is a Bregman strongly nonexpansive mapping such that
  \begin{equation*}
F(Res_{\lambda G}^f\circ B_\lambda ^f)=\tilde{F}(Res_{\lambda G}^f\circ B_\lambda ^f).
  \end{equation*}
  \item [{\rm(iii)}] $ D_f\big(u,Res_{\lambda G}^f\circ B_\lambda ^f(x)\big)+D_f\big(Res_{\lambda G}^f\circ B_\lambda ^f(x),x\big)\leq D_f(u,x),\;\forall u\in(G+B)^{-1}(0^*),\;x\in E\;and\;\lambda>0$.
  \end{itemize}
\end{lem}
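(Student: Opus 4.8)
The plan is to settle (i) by a direct computation and then to extract (ii) and (iii) from the Bregman firmly nonexpansive behaviour of the two factors of $T:=Res_{\lambda G}^f\circ B_\lambda^f$. For (i), since $\nabla f\circ\nabla f^*$ is the identity on $int\,dom f^*$, the composition collapses to $T=(\nabla f+\lambda G)^{-1}\circ(\nabla f-\lambda B)$; hence $u\in F(T)$ iff $\nabla f(u)-\lambda Bu\in(\nabla f+\lambda G)(u)=\nabla f(u)+\lambda Gu$, i.e. $-Bu\in Gu$, i.e. $0^*\in(G+B)(u)$. This gives $F(T)=(G+B)^{-1}(0^*)$ and, as a by-product, that for such $u$ the intermediate point $\bar u:=B_\lambda^f u$ satisfies $Res_{\lambda G}^f\bar u=u$.

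For (iii), fix $x$ and write $z=B_\lambda^f x$, so $\nabla f(z)=\nabla f(x)-\lambda Bx$, and $w=Tx=Res_{\lambda G}^f z$, so $\tfrac1\lambda(\nabla f(z)-\nabla f(w))\in Gw$. For $u\in(G+B)^{-1}(0^*)$ we have $-Bu\in Gu$. Applying the monotonicity of $G$ to the pairs $\big(w,\tfrac1\lambda(\nabla f(z)-\nabla f(w))\big)$ and $(u,-Bu)$, substituting $\nabla f(z)=\nabla f(x)-\lambda Bx$, and converting the resulting inner products into Bregman distances through the three-point identity \eqref{awpon} and the four-point identity, I expect to reach $D_f(u,w)+D_f(w,x)\le D_f(u,x)-\lambda\langle Bx-Bu,\,w-u\rangle$, which is (iii) once the cross term is shown to be nonnegative. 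The Bregman inverse strong monotonicity of $B$ supplies $\langle Bx-Bu,\,z-\bar u\rangle\ge0$, and the plan is to bridge the gap between $z-\bar u$ and $w-u$ by reinstating the nonnegative Bregman terms $D_f(z,x)$ and $D_f(w,z)$ that the naive estimate discards, letting them absorb the residual inner products.

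Granting (iii), part (ii) follows by softer arguments. Dropping $D_f(Tx,x)\ge0$ gives $D_f(u,Tx)\le D_f(u,x)$, so $T$ is Bregman quasi-nonexpansive and $F(T)$ is closed and convex by Lemma \ref{qctas}; moreover, if $\{x_n\}$ is bounded with $D_f(u,x_n)-D_f(u,Tx_n)\to0$, then (iii) forces $D_f(Tx_n,x_n)\to0$, which is the defining property of a Bregman strongly nonexpansive map. Finally, $F(T)=\tilde F(T)$ is obtained from (iii) together with the sequential consistency of $f$ (Lemma \ref{qpocv}) and the norm-to-norm continuity of $\nabla f$ (Lemma \ref{zakhv}), by passing to the weak limit in $Tx_n-x_n\to0$.

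The hard part will be the cross-term bookkeeping in the second paragraph. The subtlety is structural: $F(T)=(G+B)^{-1}(0^*)$ is in general strictly larger than $F(Res_{\lambda G}^f)\cap F(B_\lambda^f)=G^{-1}(0^*)\cap B^{-1}(0^*)$, so Lemma \ref{ewca} on compositions of Bregman strongly nonexpansive mappings cannot be applied directly. One is therefore forced to chain the two firmly nonexpansive inequalities through the auxiliary point $\bar u=B_\lambda^f u$ and to arrange that the mixed terms telescope; making these cancellations precise, rather than any single deep ingredient, is where the effort concentrates.
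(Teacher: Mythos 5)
This lemma is quoted in the paper directly from \cite{p20} with no proof supplied, so there is no in-paper argument to compare yours against; what follows is therefore an assessment of your plan on its own terms. Your part (i) is complete and correct: the collapse $T=(\nabla f+\lambda G)^{-1}\circ(\nabla f-\lambda B)$ via $\nabla f\circ\nabla f^*=Id$ on $ran\,\nabla f$ immediately gives $F(T)=(G+B)^{-1}(0^*)$ (modulo the standing range condition $ran(\nabla f-\lambda B)\subseteq ran\,\nabla f$ needed for $B_\lambda^f$ to be defined). Your derivation of the intermediate inequality $D_f(u,w)+D_f(w,x)\le D_f(u,x)-\lambda\langle Bx-Bu,\,w-u\rangle$ from the monotonicity of $G$ and the three-point identity \eqref{awpon} is also sound.

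The genuine gap is the very step you flag as ``bookkeeping'': showing $\langle Bx-Bu,\,w-u\rangle\ge 0$. Bregman inverse strong monotonicity pairs $Bx-Bu$ with $B_\lambda^f x-B_\lambda^f u=z-\bar u$, not with $Res_{\lambda G}^f z-Res_{\lambda G}^f\bar u=w-u$, and there is no monotone coupling between these two differences because $G$ intervenes. The proposed fix --- reinstating $D_f(z,x)$ and $D_f(w,z)$ to ``absorb the residual inner products'' --- is not carried out, and the Hilbert-space model shows it cannot work as stated: with $f=\tfrac12\|\cdot\|^2$, $T$ is the forward--backward map $J_{\lambda G}(I-\lambda B)$ with $B$ firmly nonexpansive, and the sharpest estimate obtainable by chaining the firm nonexpansiveness of $J_{\lambda G}$ with the averagedness of $I-\lambda B$ is $\|Tx-u\|^2\le\|x-u\|^2-(1-\tfrac{\lambda}{2})\|x-Tx\|^2$, whose coefficient is strictly less than the $1$ that (iii) demands; the sign of $\langle Bx-Bu,Tx-u\rangle$ is simply not determined by the hypotheses. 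So the route through that single cross term does not close, and whatever argument \cite{p20} uses must exploit both monotonicity inequalities jointly (via the four-point identity at the shifted points $z,\bar u$) rather than reduce to this one sign condition. A secondary gap sits in (ii): the paper's definition of Bregman strong nonexpansiveness is relative to $\tilde F(T)$, so you cannot first verify the inequality only for $y\in F(T)$ and postpone $\tilde F(T)=F(T)$ to the end; the demiclosedness claim ``pass to the weak limit in $Tx_n-x_n\to0$'' is exactly the content of $\tilde F(T)\subseteq F(T)$ and needs the maximal monotonicity of $G$ and $B$, not just Lemmas \ref{qpocv} and \ref{zakhv}.
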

\begin{lem}\label{awqqq}\cite{p26}
Let $f:E\rightarrow (-\infty,+\infty]$ be a proper convex and lower semicontinuous Legendre function. Then for any $z\in E$, for any $\{x_n\}\subseteq E$ and $\{t_i\}_{i=1}^N\subseteq (0,1)$ with $\sum_{i=1}^Nt_i=1$, the following holds
\begin{equation}\label{admwr}
D_f\bigg(z,\nabla f^*\big(\sum_{i=1}^Nt_i\nabla f(x_i)\big)\bigg)\leq \sum_{i=1}^Nt_iD_f(z,x_i).
\end{equation}
\end{lem}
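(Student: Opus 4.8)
The plan is to reduce the claimed inequality to nothing more than the convexity of the Fenchel conjugate $f^*$, transported back through $\nabla f$. First I would introduce the auxiliary point $w:=\nabla f^*\big(\sum_{i=1}^N t_i\nabla f(x_i)\big)$, which is exactly the second argument of $D_f(z,\cdot)$ on the left-hand side of \eqref{admwr}. Since $f$ is Legendre and $E$ is reflexive, we have $\nabla f^*=(\nabla f)^{-1}$, so $w\in int\,dom f$ and, crucially, $\nabla f(w)=\sum_{i=1}^N t_i\nabla f(x_i)$. This last identity is the only structural fact about $w$ that the computation will use.

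Next I would expand both sides directly from the definition $D_f(y,x)=f(y)-f(x)-\langle\nabla f(x),y-x\rangle$. Because $\sum_{i=1}^N t_i=1$, the terms $f(z)$ and $\sum_{i=1}^N t_i\langle\nabla f(x_i),z\rangle$ occurring in $\sum_{i=1}^N t_i D_f(z,x_i)$ cancel against the corresponding terms of $D_f(z,w)$ once one substitutes $\nabla f(w)=\sum_{i=1}^N t_i\nabla f(x_i)$. After this cancellation, the difference $\sum_{i=1}^N t_i D_f(z,x_i)-D_f(z,w)$ collapses to
\[
\Big(\sum_{i=1}^N t_i\langle\nabla f(x_i),x_i\rangle-\sum_{i=1}^N t_i f(x_i)\Big)+\big(f(w)-\langle\nabla f(w),w\rangle\big).
\]

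At this point I would invoke the Fenchel--Young equality $f(x)+f^*(\nabla f(x))=\langle x,\nabla f(x)\rangle$, which holds here because $\nabla f(x)\in\partial f(x)$ for the Legendre function $f$. It turns the first bracket into $\sum_{i=1}^N t_i f^*\big(\nabla f(x_i)\big)$ and the second into $-f^*(\nabla f(w))=-f^*\big(\sum_{i=1}^N t_i\nabla f(x_i)\big)$. Hence
\[
\sum_{i=1}^N t_i D_f(z,x_i)-D_f(z,w)=\sum_{i=1}^N t_i f^*\big(\nabla f(x_i)\big)-f^*\Big(\sum_{i=1}^N t_i\nabla f(x_i)\Big),
\]
and the right-hand side is nonnegative by Jensen's inequality applied to the convex function $f^*$. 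Since $D_f(z,w)$ is precisely the left-hand side of \eqref{admwr}, this gives the claim.

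The computation itself is routine bookkeeping; the one step that carries the genuine content is the recognition that every term involving $z$ cancels and that the surviving terms reorganize exactly into Fenchel conjugates, so that the whole estimate is just Jensen's inequality for $f^*$. The only point requiring care is checking that all the points involved lie in the interiors of the relevant domains, so that the identities $\nabla f^*=(\nabla f)^{-1}$ and the Fenchel--Young equality are legitimately available; this is guaranteed by the Legendre hypothesis on $f$.
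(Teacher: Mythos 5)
Your argument is correct: setting $w=\nabla f^*\bigl(\sum_{i=1}^N t_i\nabla f(x_i)\bigr)$, the Legendre identity $\nabla f(w)=\sum_i t_i\nabla f(x_i)$ and the Fenchel--Young equality reduce the difference $\sum_i t_iD_f(z,x_i)-D_f(z,w)$ to $\sum_i t_if^*(\nabla f(x_i))-f^*\bigl(\sum_i t_i\nabla f(x_i)\bigr)\geq 0$, which is Jensen's inequality for the convex function $f^*$. The paper itself gives no proof of this lemma (it is quoted from \cite{p26}), but your computation is exactly the standard argument from that source, equivalently the observation that $\xi\mapsto D_f\bigl(z,\nabla f^*(\xi)\bigr)=f(z)-\langle\xi,z\rangle+f^*(\xi)$ is convex in $\xi$; your closing remark about staying inside $int\,dom f$ and $int\,dom f^*$ is the right caveat, since the statement's hypothesis ``$\{x_n\}\subseteq E$'' should really read $x_i\in int\,dom f$.
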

\begin{pr}\label{awlzx}(\cite{p22} prop.2.8,p10)
Let $f$ be a $G\hat{a}teaux$ differentiable and $A:E\rightarrow 2^{E^*}$ be a maximal monotone operator such that $A^{-1}0\neq \emptyset$. Then
\begin{equation*}
D_f(q,x)\geq D_f\big(q,Res_{rA}^f(x)\big)+D_f\big(Res_{rA}^f(x),x\big).
\end{equation*}
for all $r>0,\;q\in A^{-1}0$ and $x\in E$.
\end{pr}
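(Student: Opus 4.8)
The plan is to unravel the definition of the resolvent, convert it into a monotonicity inequality, and then feed that inequality into the three point identity \eqref{awpon}. Write $z := Res_{rA}^f(x)$; single-valuedness of the resolvent (guaranteed by the Legendre hypothesis on $f$) makes $z$ well defined. By the defining formula $Res_{rA}^f = (\nabla f + rA)^{-1}\circ \nabla f$, the point $z$ satisfies $\nabla f(x) \in \nabla f(z) + rA(z)$, so that
\begin{equation*}
\tfrac{1}{r}\bigl(\nabla f(x) - \nabla f(z)\bigr) \in A(z).
\end{equation*}

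Next I would invoke monotonicity of $A$. Since $q \in A^{-1}0$ we have $0^* \in A(q)$, and pairing the two elements $\tfrac{1}{r}\bigl(\nabla f(x) - \nabla f(z)\bigr) \in A(z)$ and $0^* \in A(q)$ through the monotonicity inequality $\langle z - q,\, u^* - v^*\rangle \geq 0$ yields, after multiplying by $r > 0$,
\begin{equation*}
\langle z - q,\, \nabla f(x) - \nabla f(z)\rangle \geq 0.
\end{equation*}

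Finally, I would apply the three point identity \eqref{awpon} with the substitution $(q, z, x)$ in place of $(x, y, z)$, which gives
\begin{equation*}
D_f(q,z) + D_f(z,x) - D_f(q,x) = \langle q - z,\, \nabla f(x) - \nabla f(z)\rangle.
\end{equation*}
The right-hand side is precisely the negative of the quantity shown to be nonnegative above, hence it is $\leq 0$; rearranging gives $D_f(q, z) + D_f(z, x) \leq D_f(q, x)$, which is the asserted inequality once $z$ is replaced by $Res_{rA}^f(x)$.

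There is no serious obstacle here: the argument is a short chain of an inclusion, a monotonicity inequality, and an algebraic identity. The one point deserving care is the bookkeeping of which vectors live in $E$ and which in $E^*$, so that the duality pairing arising from monotonicity matches the pairing appearing in \eqref{awpon}; once the inclusion $\tfrac{1}{r}\bigl(\nabla f(x)-\nabla f(z)\bigr) \in A(z)$ is read off correctly and $q$ is fed in through $0^* \in A(q)$, the remaining manipulation is immediate.
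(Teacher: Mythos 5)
Your argument is correct: the inclusion $\tfrac{1}{r}\bigl(\nabla f(x)-\nabla f(z)\bigr)\in A(z)$ read off from $Res_{rA}^f=(\nabla f+rA)^{-1}\circ\nabla f$, paired with $0^*\in A(q)$ via monotonicity and then fed into the three point identity \eqref{awpon}, yields exactly the asserted inequality. Note that the paper itself gives no proof of this proposition (it is quoted from Reich--Sabach \cite{p22}); your derivation is the standard one from that source, so there is nothing to compare beyond observing that you have supplied the omitted argument correctly.
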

Next, we generalize the $k$-demimetric notation introduced in \cite{p31}.
\begin{de}\label{ertyu}
Let $E$ be a reflexive Banach space, $f:E\rightarrow (-\infty,+\infty]$ a Legendre function which is $G\hat{a}teaux$ differentiable and suppose $C$ be a nonempty, closed and convex subset of $int\,dom\,f$ and let $k\in(-\infty,1)$. A mapping $T:C\rightarrow int\,dom\,f$ with $F(T)\neq \emptyset$ is said Bregman $k$-demimetric, if for $x\in C$ and $q\in F(T)$,
\begin{equation*}
\big\langle x-q,\nabla f(x)-\nabla fT(x)\big\rangle\geq (1-k)D_f\big(x,T(x)\big).
\end{equation*}
\end{de}
\begin{ex}
Every Bregman quasi-nonexpansive mapping with the required conditions in Definition \ref{ertyu} is a Bregman $0$-demimetric mapping.
Let $p\in F(T)\neq\emptyset$ and $x\in C$, and we have
\begin{align*}
D_f(p,Tx)\leq D_f(p,x)\Rightarrow 0&\leq D_f(p,x)-D_f(p,Tx)\\
&=D_f(p,x)+D_f(x,Tx)-D_f(p,Tx)-D_f(x,Tx)\\
&=\big\langle p-x,\nabla fT(x)-\nabla f(x)\big\rangle-D_f(x,Tx),
\end{align*}
therefore
\begin{align*}
\big\langle x-p,\nabla f(x)-\nabla fT(x)\big\rangle\geq D_f(x,Tx).
\end{align*}
\end{ex}
\begin{ex}
From \cite[Lemma 2.1]{p30}, every Bregman quasi-strictly pseudo contractive mapping with the required conditions is a Bregman $k$-demimetric mapping for $k\in[0,1)$.
\end{ex}
\begin{ex}
Let $E$ be a reflexive Banach space, $f:E\rightarrow (-\infty,+\infty]$ a Legendre function which is $G\hat{a}teaux$ differentiable and $A:E\rightarrow 2^{E^*}$ a maximal monotone operator with $A^{-1}0\neq \emptyset$ and $r>0$. Then the $f$-resolvent $Res_{rA}^f$ is Bregman $0$-demimetric. In fact, from \eqref{awpon} and Proposition \ref{awlzx}, we have that
\begin{align*}
D_f(q,x)&\geq D_f\big(q,Res_{rA}^f(x)\big)+D_f\big(Res_{rA}^f(x),x\big)\\
&=D_f(q,x)+\big\langle q-Res_{rA}^f(x),\nabla f(x)-\nabla f(Res_{rA}^f(x))\big\rangle,
\end{align*}
for any $x\in E$ and $q\in A^{-1}0$. Then we obtain,
\begin{align*}
 \big\langle Res_{rA}^f(x)-q,\nabla f(x)-\nabla f(Res_{rA}^f(x))\big\rangle \geq 0,
\end{align*}
therefore
\begin{align*}
 \big\langle Res_{rA}^f(x)-x+x-q,\nabla f(x)-\nabla f(Res_{rA}^f(x))\big\rangle \geq 0,
\end{align*}
and hence from \eqref{awpon}, we have that
\begin{align}
 \big\langle x-q,\nabla f(x)-\nabla f(Res_{rA}^f(x))\big\rangle &\geq\big\langle x-Res_{rA}^f(x),\nabla f(x)-\nabla f(Res_{rA}^f(x))\big\rangle\nonumber \\
 &=D_f\big(x, Res_{rA}^f(x)\big)+D_f\big(Res_{rA}^f(x),x\big)\nonumber,
\end{align}
thus
\begin{align*}
\big\langle x-q,\nabla f(x)-\nabla f(Res_{rA}^f(x))\big\rangle\geq D_f\big(x,Res_{rA}^f(x)\big),
\end{align*}
and then we get that $Res_{rA}^f$ is Bregman $0$-demimetric.
\end{ex}
\section{\textbf{Main results}}
The following lemma is important and crucial in the proof of Theorem \ref{asli}.
\begin{lem}\label{meas}
  Let $E$ be a reflexive Banach space, $f:E\rightarrow (-\infty,+\infty]$ a Legendre function which is $G\hat{a}teaux$ differentiable and suppose $C$ be a nonempty, closed and convex subset of $int\,dom\,f$ and let $k$ be a real number with $k\in(-\infty,1)$ and let $T$ be a Bregman $k$-demimetric mapping of $C$ into $int\,dom\,f$. Then $F(T)$ is closed and convex.
\end{lem}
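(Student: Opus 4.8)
The plan is to establish the two properties separately, each of which reduces to a single observation: the defining inequality of a Bregman $k$-demimetric mapping, together with the nonnegativity of $D_f$ and the Legendre property, forces any candidate fixed point $z$ to satisfy $D_f(z,Tz)=0$ and hence $z=Tz$. Throughout I would use that $k\in(-\infty,1)$ gives $1-k>0$, which is exactly what permits dividing through by the coefficient, and that $f$ being Legendre yields the implication $D_f(x,y)=0\Rightarrow x=y$ (cf. Theorem 7.3(vi) of \cite{p2}). So the whole argument is driven by choosing the right ``test points'' to make the inner-product term on the left vanish.

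For closedness, I would take a sequence $\{q_n\}\subset F(T)$ with $q_n\to q$; since $C$ is closed, $q\in C$. Applying the demimetric inequality of Definition \ref{ertyu} with running point $x=q$ and fixed point $q_n$ gives
\[
\big\langle q-q_n,\nabla f(q)-\nabla f(Tq)\big\rangle\geq (1-k)\,D_f\big(q,Tq\big).
\]
Here $\nabla f(q)-\nabla f(Tq)$ is a fixed functional while $q-q_n\to 0$ strongly, so the left-hand side tends to $0$. Letting $n\to\infty$ yields $0\geq (1-k)D_f(q,Tq)\geq 0$, whence $D_f(q,Tq)=0$ and therefore $q=Tq$, i.e. $q\in F(T)$.

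For convexity, I would take $p,q\in F(T)$, $t\in[0,1]$, and set $z=tp+(1-t)q\in C$. Writing the demimetric inequality at the running point $x=z$ once with fixed point $p$ and once with fixed point $q$, I would form the convex combination with weights $t$ and $1-t$. The right-hand sides add to $(1-k)D_f(z,Tz)$, while the left-hand sides collapse to
\[
\big\langle t(z-p)+(1-t)(z-q),\,\nabla f(z)-\nabla f(Tz)\big\rangle,
\]
and the vector $t(z-p)+(1-t)(z-q)=z-\big(tp+(1-t)q\big)=0$ vanishes identically. Thus $0\geq (1-k)D_f(z,Tz)$, giving $D_f(z,Tz)=0$ and $z=Tz$ as before, so $z\in F(T)$.

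I do not expect a genuine obstacle here: the content is entirely in the nonnegativity of $D_f$ and the Legendre implication $D_f(x,y)=0\Rightarrow x=y$. The only step requiring care is the bookkeeping in the convexity argument, namely recognizing that the two fixed-point inequalities must be weighted by precisely $t$ and $1-t$ so that the combined inner-product term cancels; the analogous cancellation in the closedness argument is automatic from strong convergence.
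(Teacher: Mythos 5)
Your proposal is correct and follows essentially the same route as the paper's own proof: for closedness you test the demimetric inequality at $x=q$ against the fixed points $q_n$ and let the left-hand side vanish in the limit, and for convexity you combine the two inequalities with weights $t$ and $1-t$ so the inner-product term cancels, concluding in both cases via $1-k>0$ and the Legendre implication $D_f(x,y)=0\Rightarrow x=y$. Your added remark that $q\in C$ because $C$ is closed is a small point the paper leaves implicit, but the argument is otherwise identical.
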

\begin{proof}
   First we show that $F(T)$ is closed. Consider a sequence $\{q_n\}$ such that $q_n\rightarrow q$ and $q_n\in F(T)$. We conclude from the definition of $T$ that
  \begin{align*}
  \big\langle q-q_n,\nabla f(q)-\nabla fT(q)\big\rangle\geq(1-k)D_f\big(q,T(q)\big).
  \end{align*}
   Since $q_n\rightarrow q$ we have $0\geq(1-k)D_f(q,T(q))$. Then from $1-k>0$, we have that $D_f(q,T(q))=0$ and hence $q=T(q)$ and therefore $q\in F(T)$. This implies that $F(T)$ is closed.

   Next, we show that $F(T)$ is convex. Suppose $p,q\in F(T)$ and set $z=\alpha p+(1-\alpha)q$, where $\alpha\in [0,1]$. Then we have that
  \begin{align*}
  \big\langle z-p,\nabla f(z)-\nabla fT(z)\big\rangle\geq(1-k)D_f\big(z,T(z)\big),\\
  \big\langle z-q,\nabla f(z)-\nabla fT(z)\big\rangle\geq(1-k)D_f\big(z,T(z)\big).
  \end{align*}
  Thus from $\alpha\geq0$ and $1-\alpha\geq0$, we have that
  \begin{equation*}
  \big\langle \alpha z-\alpha p,\nabla f(z)-\nabla fT(z)\big\rangle\geq\alpha(1-k)D_f\big(z,T(z)\big),
  \end{equation*}
  \begin{equation*}
  \big\langle(1-\alpha)z-(1-\alpha)q,\nabla f(z)-\nabla fT(z)\big\rangle\geq(1-\alpha)(1-k)D_f\big(z,T(z)\big).
  \end{equation*}
  From these inaqualities, we get that
  \begin{equation*}
  0=\big\langle z-z,\nabla f(z)-\nabla fT(z)\big\rangle\geq(1-k)D_f\big(z,T(z)\big).
  \end{equation*}
  Thus, we get that $D_f\big(z,T(z)\big)=0$ hence $z=Tz$ therefore $z\in F(T)$. We  conclude that $F(T)$ is convex.
 \end{proof}

 \begin{thm}\label{asli}
Let $E$ be a real reflexive Banach space. Suppose $f:E\rightarrow \mathbb{R}$ is a proper, convex, lower semicontinuous, strongly coercive, Legendre function which is bounded on bounded subsets of $E$, uniformly Fr\'{e}chet differentiable and totally convex on bounded subsets of $E$. Let $C$ be a nonempty, closed and convex subset of $int\,domf$. Let $\{k_1, k_2, ..., k_M\}\subseteq (-\infty, 1)$ and $\{T_j\}_{j=1}^M$ be a finite family of Bregman $k_j$-demimetric and Bregman quasi-nonexpansive and demiclosed mappings of $C$ into itself. Suppose $\{B_i\}_{i=1}^N$ is a finite family of Bregman inverse strongly monotone mappings of $C$ into $E$ and $\{B_{i,{\eta_n}}^f\}_{i=1}^N$ is the family of anti resolvent mappings of $\{B_i\}_{i=1}^N$. Let $A:E\rightarrow 2^{E^*}$ and $G:E\rightarrow 2^{E^*}$ be maximal monotone mappings on $E$ and let $ Q_\eta=Res_{\eta G}^f = (\nabla f+\eta G)^{-1}\nabla f$ and $J_r=Res_{r A}^f = (\nabla f+rA)^{-1}\nabla f$ be the resolvents of $G$ and $A$ for $\eta>0$ and $r>0$, respectively. Assume that
  \begin{equation*}
  \Omega =A^{-1}0\cap \big(\cap_{j=1}^MF(T_j)\big)\cap \big(\cap_{i=1}^N(B_i+G)^{-1}0^*\big)\neq \emptyset.
  \end{equation*}
For $x_1\in C$ and $C_1=Q_1=C$, let $\{x_n\}$ be a sequence defined by
  \begin{equation}\label{mnzx}
  \begin{cases}
  y_n=\nabla f^{*}\bigg(\sum_{j=1}^{M}\xi_j\big((1-\lambda_n)\nabla f+\lambda_n\nabla f T_j\big)x_n\bigg),\\
  z_n=\nabla f^{*}\sum_{i=1}^{N}\sigma_i\nabla f Q_{\eta_n}B_{i,{\eta_n}}^f(y_n),\\
  u_n= J_{r_n}z_n,\\
  C_{n+1}=\big\lbrace z\in C_n: D_f(z,y_n)\leq D_f(z,x_n), D_f(z,z_n)\leq D_f(z,y_n)\\
  \qquad\qquad\qquad\qquad\qquad and\: \langle z_n-z,\nabla f(z_n)-\nabla f(u_n)\rangle \geq D_f(z_n,u_n)\big\rbrace,\\
  x_{n+1}=Proj_{C_{n+1}\cap Q_n\emph{}}^f(x_1),\quad \forall n\in \mathbb{N},\\
  Q_{n+1}=\big\lbrace z\in Q_n:\langle x_{n+1}-z,\nabla f(x_1)-\nabla f(x_{n+1})\rangle\geq 0\big\rbrace,
  \end{cases}
  \end{equation}
where $\{\lambda_n\}\subseteq (0,1)$, $\{\eta_n\}$, $\{r_n\}\subseteq(0,+\infty)$, $\{\xi_1, \xi_2, ..., \xi_M\}$, $\{\sigma_1, \sigma_2, ...,\sigma_N\}\subseteq (0,1)$ and $a,b,c\in \mathbb{R}$ satisfy the following:
  \begin{itemize}
  \item [{\rm(1)}] $0<a\leq\lambda_n\quad \forall n\in \mathbb{N}$,
  \item [{\rm(2)}] $0<c\leq r_n, \quad \forall n\in \mathbb{N}$,
  \item [{\rm(3)}] $\sum_{j=1}^M\xi_j=1 \quad and \quad \sum_{i=1}^{N}\sigma_i=1$.
  \end{itemize}

  Then $\{x_n\}$ converges strongly to a point $\omega_0\in \Omega$ where $\omega_0=Proj_\Omega^fx_1$.
  \end{thm}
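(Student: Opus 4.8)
The plan is to run the hybrid/shrinking projection analysis of the CQ-type, but with the Bregman distance $D_f$ in place of the norm, the key new device being that the sign information built into the $k_j$-demimetric inequality lets me peel the individual fixed-point residuals out of the averaged iterate $y_n$. First I would check that the scheme is well defined, i.e. that each $C_{n+1}$ and $Q_{n+1}$ is closed and convex: since the term $f(z)$ cancels in $D_f(z,y_n)-D_f(z,x_n)$, the relation $D_f(z,y_n)\le D_f(z,x_n)$ is affine in $z$, and likewise the remaining two relations defining $C_{n+1}$ and the relation defining $Q_{n+1}$ are affine in $z$, so all of these are intersections of half-spaces with the closed convex sets $C_n,Q_n$. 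Then I would prove by induction that $\Omega\subseteq C_n\cap Q_n$ for all $n$. For $C_{n+1}$, fixing $p\in\Omega$: the first inequality follows from two applications of Lemma \ref{awqqq} together with the Bregman quasi-nonexpansivity $D_f(p,T_jx_n)\le D_f(p,x_n)$; the second follows from Lemma \ref{awqqq} and the descent inequality of Lemma \ref{vtye}(iii) applied to each $Q_{\eta_n}B_{i,\eta_n}^f$; and the third is precisely the statement that $J_{r_n}=Res_{r_nA}^f$ is Bregman $0$-demimetric (Example 3 with $x=z_n$, $q=p$). For $Q_n$ I would use the variational characterization of the Bregman projection (Lemma \ref{rtmnw}(ii)): since $x_{n+1}=Proj_{C_{n+1}\cap Q_n}^f(x_1)$ and $\Omega\subseteq C_{n+1}\cap Q_n$, every $p\in\Omega$ satisfies the inequality defining $Q_{n+1}$.

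Second, I would establish convergence of the anchor sequence. Because the construction of $Q_n$ is of Haugazeau type, an induction using Lemma \ref{rtmnw}(ii) gives $x_n=Proj_{Q_n}^f(x_1)$ and $Q_{n+1}\subseteq Q_n$. Applying Lemma \ref{rtmnw}(iii) with $y=p$ and with $y=x_{n+1}$ (both in $Q_n$) yields $D_f(p,x_n)+D_f(x_n,x_1)\le D_f(p,x_1)$, so $\{D_f(x_n,x_1)\}$ is bounded and $\{x_n\}$ is bounded by Lemma \ref{qwpo}, and $D_f(x_{n+1},x_n)+D_f(x_n,x_1)\le D_f(x_{n+1},x_1)$, so $\{D_f(x_n,x_1)\}$ is nondecreasing, hence convergent, and $D_f(x_{n+1},x_n)\to0$. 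Sequential consistency (Lemma \ref{qpocv}) then gives $\|x_{n+1}-x_n\|\to0$. Since $x_{n+1}\in C_{n+1}$, the three defining relations with $z=x_{n+1}$ give $D_f(x_{n+1},y_n)\le D_f(x_{n+1},x_n)\to0$ and $D_f(x_{n+1},z_n)\le D_f(x_{n+1},y_n)\to0$, whence, again by sequential consistency and the triangle inequality, $\|x_n-y_n\|\to0$, $\|y_n-z_n\|\to0$ and $\|x_n-z_n\|\to0$; the third relation, using boundedness of $\{u_n\}$ (from Proposition \ref{awlzx} and Lemma \ref{qwpo}) and uniform continuity of $\nabla f$ on bounded sets (Lemma \ref{zakhv}), forces $D_f(z_n,u_n)\to0$, i.e. $\|z_n-u_n\|\to0$, and hence $\|x_n-u_n\|\to0$.

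Third comes the heart of the matter, the separation of the averaged operators. Writing $\nabla f(y_n)=(1-\lambda_n)\nabla f(x_n)+\lambda_n\sum_j\xi_j\nabla f(T_jx_n)$ and using $\|x_n-y_n\|\to0$ with Lemma \ref{zakhv}, I obtain $\|\sum_j\xi_j(\nabla f(T_jx_n)-\nabla f(x_n))\|\le a^{-1}\|\nabla f(y_n)-\nabla f(x_n)\|\to0$. The crucial observation is that for a fixed $p\in\Omega$ the $k_j$-demimetric inequality gives $\langle x_n-p,\nabla f(x_n)-\nabla f(T_jx_n)\rangle\ge(1-k_j)D_f(x_n,T_jx_n)\ge0$, so that the nonnegative sum $\sum_j\xi_j\langle x_n-p,\nabla f(x_n)-\nabla f(T_jx_n)\rangle=\langle x_n-p,-(\sum_j\xi_j\nabla f(T_jx_n)-\nabla f(x_n))\rangle$ tends to $0$; since each $\xi_j>0$, every summand tends to $0$, and $1-k_j>0$ yields $D_f(x_n,T_jx_n)\to0$, hence $\|x_n-T_jx_n\|\to0$. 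An analogous but simpler computation handles the $B_i$-block: combining Lemma \ref{awqqq} with Lemma \ref{vtye}(iii) gives $\sum_i\sigma_iD_f(Q_{\eta_n}B_{i,\eta_n}^f y_n,y_n)\le D_f(p,y_n)-D_f(p,z_n)\to0$ (the right side tending to $0$ by $\|y_n-z_n\|\to0$ and uniform continuity), so $\|Q_{\eta_n}B_{i,\eta_n}^f y_n-y_n\|\to0$ for each $i$. Passing to a subsequence $x_{n_k}\rightharpoonup\omega$ (possible by reflexivity and boundedness), demiclosedness of each $T_j$ gives $\omega\in F(T_j)$; the property $\tilde F=F$ of Lemma \ref{vtye}(ii) identifies $\omega\in(B_i+G)^{-1}0^*$; and since $\|z_n-u_n\|\to0$ with $u_n=J_{r_n}z_n$ and $r_n\ge c$, the Yosida approximant $A_{r_n}z_n=r_n^{-1}(\nabla f(z_n)-\nabla f(u_n))\in A(u_n)$ tends to $0$ while $u_n\rightharpoonup\omega$, so maximal monotonicity of $A$ forces $\omega\in A^{-1}0$; thus $\omega\in\Omega$.

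Finally, I would upgrade to strong convergence toward the specific point $\omega_0=Proj_\Omega^f(x_1)$. Since $\omega_0\in\Omega\subseteq Q_n$ and $x_n=Proj_{Q_n}^f(x_1)$, we have $D_f(x_n,x_1)\le D_f(\omega_0,x_1)$; combined with weak lower semicontinuity of the convex function $D_f(\cdot,x_1)$ and the minimality of $\omega_0$ over $\Omega$, any weak cluster point $\omega\in\Omega$ satisfies $D_f(\omega,x_1)\le\liminf_kD_f(x_{n_k},x_1)\le D_f(\omega_0,x_1)\le D_f(\omega,x_1)$, forcing $\omega=\omega_0$ by uniqueness of the Bregman projection; hence the whole sequence satisfies $x_n\rightharpoonup\omega_0$ and $\lim_nD_f(x_n,x_1)=D_f(\omega_0,x_1)$. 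Then Lemma \ref{rtmnw}(iii) gives $D_f(\omega_0,x_n)\le D_f(\omega_0,x_1)-D_f(x_n,x_1)\to0$, and sequential consistency (Lemma \ref{qpocv}) yields $\|x_n-\omega_0\|\to0$. The step I expect to be the main obstacle is the third one: extracting the individual residuals $\|x_n-T_jx_n\|\to0$ from the single averaged identity for $y_n$, where the decisive point is that the $k_j$-demimetric inequality renders each summand nonnegative, and the care needed to identify the weak limit of $Q_{\eta_n}B_{i,\eta_n}^f$ as the parameters $\eta_n$ vary.
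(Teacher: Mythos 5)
Your proposal is correct, and for the core of the argument --- closedness and convexity of $C_n$ and $Q_n$, the induction $\Omega\subseteq C_{n+1}\cap Q_n$ via Lemma \ref{awqqq}, Lemma \ref{vtye}(iii) and the three-point identity, and above all the extraction of the individual residuals $D_f(x_n,T_jx_n)\to0$ and $D_f(y_n,Q_{\eta_n}B^f_{i,\eta_n}y_n)\to0$ from the vanishing of a sum of nonnegative terms supplied by the demimetric inequality --- it coincides with the paper's Steps 1, 2 and 5--8. The one genuine divergence is the endgame. The paper never touches the weak topology: in its Step 4 it applies Lemma \ref{rtmnw}(iii) to $x_m\in C_n\cap Q_{n-1}$ for $m\geq n$ to get $D_f(x_m,x_n)\leq D_f(x_m,x_1)-D_f(x_n,x_1)\to0$, concludes by sequential consistency that $\{x_n\}$ is norm-Cauchy with strong limit $\bar x$, shows $\bar x\in\Omega$ (so demiclosedness is only ever invoked along a strongly convergent sequence), and finally identifies $\bar x=\omega_0$ from $D_f(\bar x,x_1)=\lim_n D_f(x_n,x_1)\leq D_f(\omega_0,x_1)$ combined with the variational inequality inherited from $Q_{n+1}$. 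You instead pass to weak cluster points, use weak lower semicontinuity of $D_f(\cdot,x_1)$ and uniqueness of the Bregman projection to pin every cluster point to $\omega_0$, and only then upgrade to norm convergence via $D_f(\omega_0,x_n)\leq D_f(\omega_0,x_1)-D_f(x_n,x_1)\to0$; this works (your auxiliary observation $x_n=Proj^f_{Q_n}(x_1)$ is correct and is what makes that last inequality available), but it spends reflexivity, weak compactness and the lower-semicontinuity argument, none of which the paper's Cauchy-sequence route needs. Two small remarks: boundedness of $\{u_n\}$ from boundedness of $D_f(q,u_n)$ is Lemma \ref{fdyuq} (varying second argument), not Lemma \ref{qwpo}; and the difficulty you flag at the end --- that $\tilde F(Q_{\eta_n}B^f_{i,\eta_n})$ is defined for a fixed mapping while $\eta_n$ varies --- is real, but the paper's own Step 8 passes over it in exactly the same way, so your treatment is not behind the paper on that point.
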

  \begin{proof}
   We divide the proof into several steps:

   $Step\,1$: First we prove that $\Omega$ is a closed and convex subset of $C$.\\
    Since $\{T_j\}_{j=1}^M$ is a finite family of $k_j$-Bregman demimetric mappings, by Lemma \ref{meas} and the condition $\Omega\neq\emptyset$, $F(T_j)$ is nonempty, closed and convex for $1\leq j\leq M$. Also, it follows from Lemma \ref{vtye} (i)-(ii) that $(B_i+G)^{-1}0^*=F(Q_{\eta_n}B_{i,\eta_n}^f)$ and $Q_{\eta_n}B_{i,\eta_n}^f$ is a Bregman strongly nonexpansive mapping and therefore from Lemma \ref{ewca} we have that
    \begin{equation*}
      F(Q_{\eta_n}B_{i,\eta_n}^f)=\tilde{F}(Q_{\eta_n}B_{i,\eta_n}^f).
    \end{equation*}
    Thus $\{Q_{\eta_n} B_{i,\eta_n}^f\}$ is a family of Bregman quasi-nonexpansive mappings. Using $\Omega\neq \emptyset$ and Lemma \ref{qctas}, we see that $(B_i+G)^{-1}0^*=F(Q_{\eta_n} B_{i,\eta_n}^f)$ is a nonempty, closed and convex set. We also know that $A^{-1}0$ is closed and convex. Then, $\Omega$ is nonempty, closed and convex. Therefore $Proj_\Omega^f$ is well defined.

    $Step\,2$: We prove that $C_n$ and $Q_n$ are closed and convex subsets of $C$ and $\Omega\subseteq C_{n+1}\cap Q_n,\forall n\in \mathbb{N}$.\\
    In fact, it is clear that $C_1=C$ is closed and convex. Suppose that $C_k$ is closed and convex for some $k\geq1$. Note that
    \begin{align*}
    D_f(z,y_k)\leq D_f(z,x_k)&\Leftrightarrow f(z)-f(y_k)-\langle\nabla f(y_k),z-y_k\rangle\leq f(z)-f(x_k)-\langle\nabla f(x_k), z-x_k\rangle\\
    &\Leftrightarrow\langle\nabla f(x_k),z-x_k\rangle-\langle\nabla f(y_k),z-y_k\rangle\leq f(y_k)-f(x_k).
    \end{align*}
    Similarly, we have that
    \begin{align*}
    D_f(z,z_k)\leq D_f(z,y_k)\Leftrightarrow\langle\nabla f(y_k),z-y_k\rangle-\langle\nabla f(z_k),z-z_k\rangle\leq f(z_k)-f(y_k).
    \end{align*}
    Thus from the fact that $D_f(.,x)$ is continuous for each fixed $x$  and using the above inequalities $\{z\in C_k: D_f(z,y_k)\leq D_f(z,x_k)\}$ and $\{z\in C_k: D_f(z,z_k)\leq D_f(z,y_k)\}$ are closed and convex. We have also $\{z\in C_k: \langle z_k-z,\nabla f(z_k)-\nabla f(u_k)\rangle \geq D_f(z_k,u_k)\}$ is  closed and convex. Therefore, $C_{k+1}$ is closed and convex. From mathematical induction we have that $C_n$ is a closed and convex subset in $C$ with $C_{n+1}\subseteq C_n$ for all $n\in \mathbb{N}$.\\
   Also, it is clear that $Q_1=C$ is closed and convex. Suppose that $Q_k$ is closed and convex for some $k\geq1$. Hence $\{z\in Q_k$: $\langle x_{k+1}-z,\nabla f(x_1)-\nabla f(x_{k+1})\rangle\geq0\}$ is closed and convex, i.e., $Q_{k+1}$ is a closed and convex subset of $Q_k$. Therefore by  mathematical induction $Q_n$ is a closed and convex subset of $C$ with $Q_{n+1}\subseteq Q_n$ for all $n\in \mathbb{N}$. Next, we show that $\Omega \subseteq C_n$ for all $n\geq 1$. Clearly $\Omega \subseteq C_1=C$. Assume that  $\Omega \subseteq C_k$ for some $k\in \mathbb{N}$. Note from Lemma \ref{awqqq} that
  \begin{align}\label{mner}
    D_f(z,y_k)&= D_f\big(z,\nabla f^{*}\sum_{j=1}^{M}\xi_j((1-\lambda_k)\nabla f+\lambda_k\nabla f T_j)x_k\big)\nonumber\\
    &=D_f\big(z,\nabla f^{*}\sum_{j=1}^{M}\xi_j\nabla f\nabla f^*((1-\lambda_k)\nabla f+\lambda_k\nabla f T_j)x_k\big)\nonumber\\
    &\leq \sum_{j=1}^{M}\xi_j D_f\big(z,\nabla f^*((1-\lambda_k)\nabla f+\lambda_k\nabla f T_j)x_k\big)\\
    &\leq \sum_{j=1}^{M}\xi_j\big((1-\lambda_k)D_f(z,x_k)+\lambda_kD_f(z,T_jx_k)\big)\nonumber\\
    &\leq \sum_{j=1}^{M}\xi_j\big((1-\lambda_k)D_f(z,x_k)+\lambda_kD_f(z,x_k)\big)\nonumber\\
    &=D_f(z,x_k),\nonumber
  \end{align}
  for all $z\in \Omega$. Furthermore, since $B_i$ is a Bregman inverse strongly monotone mapping for all $1\leq i\leq N$ and hence from Lemmas \ref{vtye} and \ref{awqqq} we have that
  \begin{align}\label{mnyu}
    D_f(z,z_k)&=D_f(z,\nabla f^{*}\sum_{i=1}^{N}\sigma_i\nabla f Q_{\eta_k}B_{i,{\eta_k}}^f y_k)\nonumber\\
    &\leq \sum_{i=1}^{N}\sigma_iD_f(z,Q_{\eta_k}B_{i,{\eta_k}}^f y_k)\leq \sum_{i=1}^{N}\sigma_iD_f(z,y_k)\\
    &=D_f(z,y_k),\nonumber
  \end{align}
  for all $z\in \Omega$.
  Also, since $J_{r_k}$ is the resolvent of $A$ and $u_k=J_{r_k}z_k$, we have from \eqref{awpon} and Proposition \ref{awlzx} that
  \begin{align}\label{mnyo}
  \langle z_k-z,\nabla f(z_k)-\nabla f(u_k)\rangle&=D_f(z,z_k)+D_f(z_k,u_k)-D_f(z,u_k)\nonumber\\
  &=D_f(z,z_k)+D_f(z_k,u_k)-D_f(z,J_{r_k}z_k)\\
  &\geq D_f(z,z_k)+D_f(z_k,u_k)-D_f(z,z_k)=D_f(z_k,u_k)\nonumber,
  \end{align}
  for all $z\in \Omega$. From \eqref{mner},\eqref{mnyu} and \eqref{mnyo}, we have that $\Omega \subseteq C_{k+1}$. Therefore, we have by mathematical induction that $\Omega \subseteq C_n$ for all $n\in \mathbb{N}$.\\
  Now, we shall show that $\Omega \subseteq Q_n$ for all $n\in \mathbb{N}$. Note that $\Omega \subseteq Q_1=C$. Assume that $\Omega \subseteq Q_k$ for some $k\in \mathbb{N}$. Thus, $\Omega \subseteq C_{k+1}\cap Q_k$ for some $k\in \mathbb{N}$. From $x_{k+1}=Proj_{C_{k+1}\cap Q_k}^f(x_1)$ and Lemma \ref{rtmnw}, we have that
  \begin{align*}\label{mnkl}
  \langle x_{k+1}-z,\nabla f(x_1)-\nabla f(x_{k+1})\rangle\geq0,\qquad \forall z\in C_{k+1}\cap Q_k.
  \end{align*}
  Since $\Omega \subseteq C_{k+1}\cap Q_k$, we have that
  \begin{align*}
  \langle x_{k+1}-z,\nabla f(x_1)-\nabla f(x_{k+1})\rangle\geq 0,\qquad \forall z\in \Omega.
  \end{align*}
  Then, we get $\Omega \subseteq Q_{k+1}$. By mathematical induction, we have that $\Omega \subseteq Q_n$ for all $n\in \mathbb{N}$. This implies that $\{x_n\}$ is well defined.

  $Step\,3$: We show that $\displaystyle\lim_{n\rightarrow \infty} D_f(x_n,x_1)$ exists.\\
  Since $\Omega$ is nonempty, closed and convex, there exists a $\omega_0\in \Omega$ such that $\omega_0=Proj_{\Omega}^f(x_1)$. From $x_{n+1}=Porj_{C_{n+1}\cap Q_n}^f(x_1)$, we get that
  \begin{equation*}
    D_f(x_{n+1},x_1)\leq D_f(z,x_1),
  \end{equation*}
  for all $z\in C_{n+1}\cap Q_n$. From $\omega_0\in \Omega \subseteq C_{n+1}\cap Q_n$, we obtain that
  \begin{equation}\label{bher}
  D_f(x_{n+1},x_1)\leq D_f(\omega_0,x_1).
  \end{equation}
  This shows that $\{D_f(x_n,x_1)\}$ is a bounded sequance. By Lemma \ref{rtmnw} (iii), we have that
  \begin{align}\label{jkwe}
  0&\leq D_f(x_{n+1},x_n)=D_f\big(x_{n+1},Proj_{C_n\cap Q_{n-1}}^f(x_1)\big)\nonumber\\
  &\leq D_f(x_{n+1},x_1)-D_f\big(Proj_{C_n\cap Q_{n-1}}^f(x_1),x_1\big)\\
  &=D_f(x_{n+1},x_1)-D_f(x_n,x_1),\nonumber
  \end{align}
  for all $n>1$. Therefore
  \begin{equation}\label{fgrt}
  D_f(x_n,x_1)\leq D_f(x_{n+1},x_1).
  \end{equation}
  This implies that $\{D_f(x_n,x_1)\}$ is bounded and nondecreasing. Then  $\displaystyle\lim_{n\rightarrow \infty}D_f(x_n,x_1)$ exists. In view of Lemma \ref{qwpo}, we deduce that the sequence $\{x_n\}$ is bounded. Also, from \eqref{jkwe}, we have that
  \begin{equation}\label{dfpo}
  \lim_{n\rightarrow \infty} D_f(x_{n+1},x_n)=0.
  \end{equation}
  Since the function $f$ is totally convex on bounded sets, by Lemma \ref{qpocv}, we have that
  \begin{equation}\label{asol}
  \lim_{n\rightarrow \infty}\|x_{n+1}-x_n\|=0.
  \end{equation}

  $Step\,4$: We prove that $\{x_n\}$ is a Cauchy sequence in $C$.\\
  We have $C_m\subseteq C_n$ and $Q_m\subseteq Q_n$ for any $m,n\in \mathbb{N}$ with $m\geq n$. From $x_n=Proj_{C_n\cap Q_{n-1}}^f(x_1)\in C_n\cap Q_{n-1}$ and Lemma \ref{rtmnw} we have that
  \begin{align}\label{asno}
  D_f(x_m,x_n)&=D_f\big(x_m,Proj_{C_n\cap Q_{n-1}}^f(x_1)\big)\nonumber\\
  &\leq D_f(x_m,x_1)-D_f\big(Proj_{C_n\cap Q_{n-1}}^f(x_1),x_1\big)\\
  &=D_f(x_m,x_1)-D_f(x_n,x_1).\nonumber
  \end{align}
  Letting $m,n\rightarrow \infty$ in \eqref{asno}, we deduce that $D_f(x_m,x_n)\rightarrow 0$. In view of Lemma \ref{qpocv}, since the function $f$ is totally convex on bounded sets, we get that $\|x_m-x_n\|\rightarrow0$ as $m,n\rightarrow \infty$. Thus $\{x_n\}$ is a Cauchy sequence. Since $E$  is a Banach space and $C$ is closed and convex, we conclude that there exists $\bar{x}\in C$ such that
  \begin{align}\label{awlo}
   \displaystyle\lim_{n\rightarrow \infty}\|x_n-\bar{x}\|=0.
  \end{align}

  $Step\,5$: We prove that $\displaystyle\lim_{n\rightarrow \infty}\|\nabla f(x_n)-\nabla f(y_n)\|=0$.\\
  Using \eqref{dfpo} and from $x_{n+1}\in C_{n+1}$, we get that
  \begin{align}\label{azlk}
  D_f(x_{n+1},y_n)\leq D_f(x_{n+1},x_n)\rightarrow 0\,\;(as\;\;n\rightarrow \infty),
  \end{align}
  then $\lim_{n\rightarrow \infty}D_f(x_{n+1},y_n)=0$. Since the function $f$ is totally convex on bounded sets and $\{y_n\}$ is bounded, by Lemma \ref{qpocv}, we have that
  \begin{equation}\label{asqv}
  \lim_{n\rightarrow \infty}\|x_{n+1}-y_n\|=0.
  \end{equation}
  Using \eqref{asol} and \eqref{asqv}, we have that
  \begin{align*}
  \|x_n-y_n\|\leq \|x_n-x_{n+1}\|+\|x_{n+1}-y_n\|\rightarrow 0\,\;(as\;\;n\rightarrow \infty),
  \end{align*}
  then
  \begin{align}\label{astp}
  \lim_{n\rightarrow \infty}\|x_n-y_n\|=0.
  \end{align}
  Since from Lemma \ref{zakhv}, $\nabla f$ is uniformly continuous, we have that
  \begin{equation}\label{awmx}
  \lim_{n\rightarrow \infty}\|\nabla f(x_n)-\nabla f(y_n)\|=0.
  \end{equation}

 $Step\,6$: We prove that $\displaystyle\lim_{n\rightarrow \infty}\|\nabla f(y_n)-\nabla f(z_n)\|=0$.\\
  Using \eqref{azlk} and from $x_{n+1}\in C_{n+1}$, we get that
  \begin{align}\label{aedm}
  D_f(x_{n+1},z_n)\leq D_f(x_{n+1},y_n)\rightarrow 0\,\;(as\;\;n\rightarrow \infty),
  \end{align}
   then $\lim_{n\rightarrow \infty}D_f(x_{n+1},z_n)=0$. Since function $f$ is totally convex on bounded sets and $\{z_n\}$ is bounded, by Lemma \ref{qpocv}, we have that
  \begin{align}\label{aqhn}
  \lim_{n\rightarrow \infty}\|x_{n+1}-z_n\|=0.
  \end{align}
   Using \eqref{asol} and \eqref{aqhn}, we have that
  \begin{align}\label{aqhnui}
  \lim_{n\rightarrow \infty}\|x_n-z_n\|=0.
  \end{align}
  Applying \eqref{astp} and \eqref{aqhnui}, we get that
  \begin{align}\label{aqekgi}
  \lim_{n\rightarrow \infty}\|y_n-z_n\|=0.
  \end{align}
  Since $\nabla f$ is uniformly continuous, we have that
  \begin{align}\label{awlfm}
  \lim_{n\rightarrow \infty}\|\nabla f(y_n)-\nabla f(z_n)\|=0.
  \end{align}

  $Step\,7$: We prove that $\displaystyle\lim_{n\rightarrow \infty}\|\nabla f(z_n)-\nabla f(u_n)\|=0$.\\
  Using $x_{n+1}\in C_{n+1}$ and from \eqref{awpon} we get that
  \begin{align*}
  D_f(z_n,u_n)&\leq \langle z_n-x_{n+1},\nabla f(z_n)-\nabla f(u_n)\rangle\\
  &=D_f(x_{n+1},z_n)+D_f(z_n,u_n)-D_f(x_{n+1},u_n),
  \end{align*}
  then
  \begin{align*}
  0\leq D_f(x_{n+1},z_n)-D_f(x_{n+1},u_n),
  \end{align*}
  therefore from \eqref{aedm}, we have that
  \begin{align}\label{aewpo}
  D_f(x_{n+1},u_n)\leq D_f(x_{n+1},z_n)\rightarrow 0\,\;(as\;\;n\rightarrow \infty),
  \end{align}
  and then $\lim_{n\rightarrow \infty}D_f(x_{n+1},u_n)=0$. Since the function $f$ is totally convex on bounded sets and from Proposition \ref{awlzx} and Lemma \ref{fdyuq}, $\{u_n\}$ is bounded hence by Lemma \ref{qpocv}, we have that
  \begin{align}\label{awjmn}
  \lim_{n\rightarrow \infty}\|x_{n+1}-u_n\|=0.
  \end{align}
   Using \eqref{asol} and \eqref{awjmn}, we conclude that
  \begin{align}\label{astpw}
  \lim_{n\rightarrow \infty}\|x_n-u_n\|=0.
  \end{align}
  Now, by \eqref{aqhnui} and \eqref{astpw} we get that
  \begin{align}\label{argyj}
  \lim_{n\rightarrow \infty}\|z_n-u_n\|=0.
  \end{align}
   Since from Lemma \ref{zakhv}, $\nabla f$ is uniformly continuous, we have that
  \begin{align}\label{awvbe}
  \lim_{n\rightarrow \infty}\|\nabla f(z_n)-\nabla f(u_n)\|=0.
  \end{align}

  $Step\,8$: We prove that $\bar{x}\in \Omega$.\\
  Since $T_j$ is Bregman $k_j$-demimetric for all $1\leq j\leq M$, we get that
  \begin{align*}
  \langle x_n-z,\nabla f(x_n)-\nabla f(y_n)\rangle&=\big\langle x_n-z,\nabla f(x_n)-\nabla f\nabla f^{*}\bigg(\sum_{j=1}^{M}\xi_j\big((1-\lambda_n)\nabla f+\lambda_n\nabla f T_j\big)x_n\bigg)\big\rangle\\
  &=\sum_{j=1}^{M}\xi_j\big\langle x_n-z,\nabla f(x_n)-\big((1-\lambda_n)\nabla f+\lambda_n\nabla f T_j\big)x_n\big\rangle\\
  &=\sum_{j=1}^{M}\xi_j\lambda_n\langle x_n-z,\nabla f(x_n)-\nabla fT_j(x_n)\rangle\\
  &\geq \sum_{j=1}^{M}\xi_j\lambda_n(1-k)D_f(x_n,T_jx_n)\\
  &\geq \sum_{j=1}^{M}\xi_ja(1-k)D_f(x_n,T_jx_n),
  \end{align*}
  for all $z\in \cap_{j=1}^MF(T_j)$. We have from \eqref{awmx} that
  \begin{align*}
    \displaystyle\lim_{n\rightarrow\infty}D_f(x_n,T_jx_n)=0,
  \end{align*}
   for all $1\leq j\leq M$. Since the function $f$ is totally convex on bounded sets, by Lemma \ref{qpocv}, we have that
  \begin{align}\label{aqoxg}
  \lim_{n\rightarrow \infty}\|x_n-T_jx_n\|=0,\quad \forall j\in\{1,...,M\}.
  \end{align}
   Since $T_j$ is demiclosed for all $1\leq j\leq M$ and from \eqref{awlo}, $x_n\rightarrow\bar{x}$ as $n\rightarrow\infty$, we have that $\bar{x}\in\cap_{j=1}^MF(T_j)$. We now show that $\bar{x}\in\cap_{i=1}^N(B_i+G)^{-1}0^*$. From \eqref{awpon} and Lemma \ref{vtye} (iii), we get that
  \begin{align*}
  \langle y_n-z,\nabla f(y_n)-\nabla f(z_n)\rangle&=\big\langle y_n-z,\nabla f(y_n)-\nabla f\nabla f^{*}\sum_{i=1}^{N}\sigma_i\nabla f Q_{\eta_n}B_{i,{\eta_n}}^f(y_n)\big\rangle\\
  &=\sum_{i=1}^{N}\sigma_i\big\langle y_n-z,\nabla f(y_n)-\nabla f Q_{\eta_n}B_{i,{\eta_n}}^f(y_n)\big\rangle\\
  &=\sum_{i=1}^{N}\sigma_i\big(D_f(z,y_n)+D_f(y_n,Q_{\eta_n}B_{i,{\eta_n}}^fy_n)-
  D_f(z,Q_{\eta_n}B_{i,{\eta_n}}^fy_n)\big)\\
  &\geq \sum_{i=1}^{N}\sigma_i\big(D_f(z,y_n)+D_f(y_n,Q_{\eta_n}B_{i,{\eta_n}}^fy_n)-D_f(z,y_n)\big)\\
  &=\sum_{i=1}^{N}\sigma_i\big(D_f(y_n,Q_{\eta_n}B_{i,{\eta_n}}^fy_n)\big),
  \end{align*}
  for all $z\in \cap_{i=1}^N(B_i+G)^{-1}0^*$ and $i\in \{1,...,N\}$. Using \eqref{awlfm}, from the above, we have that
  \begin{align*}
  \displaystyle\lim_{n\rightarrow \infty}D_f(y_n,Q_{\eta_n}B_{i,{\eta_n}}^fy_n)=0,
  \end{align*}
  for all $1\leq i\leq N$. Since the function $f$ is totally convex on bounded sets, by Lemma \ref{qpocv}, we conclude that
  \begin{align}\label{azjku}
  \displaystyle\lim_{n\rightarrow \infty}\|y_n-Q_{\eta_n}B_{i,{\eta_n}}^fy_n\|=0.
  \end{align}
  On the other hand, it follows from \eqref{awlo} and \eqref{astp} that
  \begin{align}\label{aoeng}
  \displaystyle\lim_{n\rightarrow \infty}\|y_n-\bar{x}\|=0.
  \end{align}
  Now, from \eqref{azjku} and \eqref{aoeng}, we have that $\bar{x}\in\tilde{F}(Q_{\eta_n}B_{i,{\eta_n}})$ for all $i\in \{1,...,N\}$. From Lemma \ref{vtye}, we conclude that
  \begin{align*}
  \tilde{F}(Q_{\eta_n}B_{i,{\eta_n}})=F(Q_{\eta_n}B_{i,{\eta_n}})=(B_i+G)^{-1}0^*,\quad \forall i\in\{1,...,N\},
  \end{align*}
  and therefore $\bar{x}\in\cap_{i=1}^N(B_i+G)^{-1}0^*$.\\
  We now show $\bar{x}\in A^{-1}0$. Using $r_n\geq c$, we have from \eqref{awvbe} that
  \begin{align*}
  \lim_{n\rightarrow \infty}\frac{1}{r_n}\|\nabla f(z_n)-\nabla f(u_n)\|=0.
  \end{align*}
  So applying $A_{r_n}$, the Yosida approximation of $A$, we have
  \begin{align*}
  \displaystyle\lim_{n\rightarrow\infty}\|A_{r_n}z_n\|=\lim_{n\rightarrow \infty}\frac{1}{r_n}\|\nabla f(z_n)-\nabla f(u_n)\|=0.
  \end{align*}
  Since $A_{r_n}z_n \in Au_n$, for $(p,p^*)\in A$,  we have from the monotonicity of $A$ that $\langle p-u_n,p^*-A_{r_n}z_n\rangle\geq0$ for all $n\in \mathbb{N}$. By \eqref{awlo} and \eqref{aqhnui}, we have that
  \begin{align*}
  \displaystyle\lim_{n\rightarrow\infty}\|z_n-\bar{x}\|=0.
  \end{align*}
  From \eqref{argyj} and the above, we have $\|u_n-\bar{x}\|\rightarrow0$. Thus, we get
   $\langle p-\bar{x},p^*\rangle\geq0$. From the maximality of $A$, we have that $\bar{x}\in A^{-1}0$. Therefore $\bar{x}\in\Omega$.

   Since $\omega_0=Proj_{\Omega}^f(x_1),\;\bar{x}\in\Omega$ and $\|x_n-\bar{x}\|\rightarrow0$, we have from \eqref{bher} that
   \begin{align*}
   D_f(\omega_0,x_1)\leq D_f(\bar{x},x_1)=\displaystyle\lim_{n\rightarrow \infty}D_f(x_n,x_1)\leq \displaystyle\lim_{n\rightarrow \infty}D_f(\omega_0,x_1)=D_f(\omega_0,x_1),
   \end{align*}
   which implies that
   \begin{align*}
   \lim_{n\rightarrow \infty}D_f(x_n,x_1)=D_f(\omega_0,x_1),
   \end{align*}
   therefore,
  \begin{align}\label{qaswe}
   D_f(\bar{x},x_1)=D_f(\omega_0,x_1).
  \end{align}
   From \eqref{mnzx}, \eqref{awlo} and $\omega_0\in \Omega\subseteq Q_{n+1}$ for all $n\in\mathbb{N}$ and also since $\nabla f$ is uniformly continuous on bounded subsets and is bounded on bounded subsets, we have that
   \begin{align}\label{lkscv}
    \langle x_{n+1}-\omega_0,\nabla f(x_1)-\nabla f(x_{n+1})\rangle\geq 0 &\Rightarrow
    \lim_{n\rightarrow \infty}\langle x_{n+1}-\omega_0,\nabla f(x_1)-\nabla f(x_{n+1})\rangle\geq 0\nonumber\\
    &\Rightarrow \langle \bar{x}-\omega_0,\nabla f(x_1)-\nabla f(\bar{x})\rangle\geq 0.
   \end{align}
    Now, from \eqref{awpon} and \eqref{qaswe}, we get that
    \begin{align*}
      D_f(\omega_0,\bar{x})&= D_f(\omega_0,\bar{x})+D_f(\bar{x},x_1)-D_f(\omega_0,x_1)=\langle \omega_0-\bar{x},\nabla f(x_1)-\nabla f(\bar{x})\rangle.
    \end{align*}
    From \eqref{lkscv} and the above, we conclude that $D_f(\omega_0,\bar{x})\leq0$ and therefore $D_f(\omega_0,\bar{x})=0$. Thus $\omega_0=\bar{x}$ and hence $x_n\rightarrow \omega_0$. This completes the proof.
  \end{proof}

  Next, we prove a proposition to extend Theorem \ref{asli}.
  \begin{pr}\label{qbjs}
    Let $E$ be a real reflexive Banach space. Suppose $f:E\rightarrow \mathbb{R}$ be a proper, convex, lower semicontinuous, Legendre function and $G\hat{a}teaux$ differentiable of $E$. Let $C$ be a nonempty, closed and convex subset of $int\,domf$. Suppose $k\in(-\infty,0]$ and a mapping $T:C\rightarrow C$ with $F(T)\neq \emptyset$ be Bregman $k$-demimetric. Then $T$ is a Bregman quasi-nonexpansive mapping.
 \end{pr}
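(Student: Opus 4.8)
The plan is to reverse the computation carried out in the first example following Definition \ref{ertyu}, exploiting the three point identity \eqref{awpon} together with the sign condition $k\leq 0$. Fix $x\in C$ and $p\in F(T)$; the goal is to establish $D_f(p,Tx)\leq D_f(p,x)$, which is precisely the defining inequality of a Bregman quasi-nonexpansive mapping.

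First I would apply the three point identity \eqref{awpon} with the substitution $x\mapsto p$, $y\mapsto x$, $z\mapsto Tx$, obtaining
\[
D_f(p,x)+D_f(x,Tx)-D_f(p,Tx)=\langle p-x,\nabla f(Tx)-\nabla f(x)\rangle.
\]
Rearranging and rewriting the inner product by negating both of its arguments gives
\[
D_f(p,x)-D_f(p,Tx)=\langle x-p,\nabla f(x)-\nabla f(Tx)\rangle-D_f(x,Tx).
\]
Next I would invoke the hypothesis that $T$ is Bregman $k$-demimetric. By Definition \ref{ertyu}, applied with $q=p$, we have $\langle x-p,\nabla f(x)-\nabla fT(x)\rangle\geq(1-k)D_f(x,Tx)$. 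Substituting this lower bound into the displayed identity yields
\[
D_f(p,x)-D_f(p,Tx)\geq(1-k)D_f(x,Tx)-D_f(x,Tx)=-k\,D_f(x,Tx).
\]

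Finally, since $k\in(-\infty,0]$ we have $-k\geq 0$, and since $D_f$ takes values in $[0,+\infty)$ we have $D_f(x,Tx)\geq 0$; hence $-k\,D_f(x,Tx)\geq 0$. It follows that $D_f(p,x)-D_f(p,Tx)\geq 0$, i.e. $D_f(p,Tx)\leq D_f(p,x)$ for every $x\in C$ and every $p\in F(T)$, so $T$ is Bregman quasi-nonexpansive. I do not anticipate a serious obstacle: the argument is an essentially algebraic manipulation, and the only point requiring care is the bookkeeping of signs in the three point identity and the orientation of the inner product in the demimetric inequality. The restriction $k\leq 0$ is exactly what is needed to absorb the residual term $-k\,D_f(x,Tx)$ with the correct sign; for $k\in(0,1)$ this term has the wrong sign and the conclusion need not hold.
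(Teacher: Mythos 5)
Your proof is correct and is essentially identical to the paper's argument: both rearrange the three point identity \eqref{awpon} applied to $(p,x,Tx)$ and then insert the Bregman $k$-demimetric inequality, leaving the residual term $-k\,D_f(x,Tx)\geq 0$ (the paper writes the equivalent bound $D_f(p,Tx)-D_f(p,x)\leq k\,D_f(x,Tx)\leq 0$). No gaps; the sign bookkeeping is exactly as in the paper.
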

 \begin{proof}
 Let $p\in F(T)$ and $x\in C$. Using \eqref{awpon} and Definition \ref{ertyu}, we have
%
 \begin{align*}
 D_f(p,Tx)-D_f(p,x)&=D_f(x,Tx)-D_f(p,x)-D_f(x,Tx)+D_f(p,Tx)\\
 &=D_f(x,Tx)-\langle p-x,\nabla f(Tx)-\nabla f(x)\rangle\\
 &\leq D_f(x,Tx)-(1-k)D_f(x,Tx)\\
 &=kD_f(x,Tx)\leq0,
 \end{align*}
 then $T$ is a Bregman quasi-nonexpansive mapping.
 \end{proof}
 \begin{rem}
 From Proposition \ref{qbjs}, for each $1\leq j\leq M$, we may remove the condition that $T_j$ is Bregman quasi-nonexpansive in Theorem \ref{asli} when $k_j\leq0$.
 \end{rem}
 \begin{op}
Can one generalize Proposition \ref{qbjs} to Bregman k-demimetric mappings with $k<1$?
 \end{op}

\vspace{0.1in}
\hrule width \hsize \kern 1mm

\begin{thebibliography}{XX}
   \bibitem{p1}  Aoyama, K.; Kohsaka, F.; Takahashi, W. Three generalizations of firmly nonexpansive mappings: Their relations and continuous properties. J. Nonlinear Convex Anal. 10, 131-147 (2009).

  \bibitem{p2} Bauschke, H.H., Borwein, J.M., Combettes, P.L.: Essential smoothness, essential strict convexity, and Legendre functions in Banach spaces. Commun. Con- temp. Math. 3, 615-647 (2001).

  \bibitem{p3} Bauschke, H.H., Borwein, J.M.: Legendre functions and the method of random Bregman projections. J. Convex Anal. 4, 27-67 (1997).

  \bibitem{p4} Bauschke HH, Borwein JM, Combettes PL. Essential smoothness, essential strict convexity, and Legendre functions in Banach spaces. Commun Contemp Math. 3:615-647 (2001).

  \bibitem{p5} Bauschke, H.H., Borwein, J.M., Combettes, P.L.: Bregman monotone optimization algorithms. SIAM J. Control Optim. 42, 596-636 (2003).

  \bibitem{p6} Bonnans, J.F., Shapiro, A.: Perturbation Analysis of Optimization Problems. Springer, New York (2000).

  \bibitem{p7} Bregman LM. The relaxation method for finding the common point of convex sets and its application to the solution of problems in convex programming. USSR Comput Math Math Phys. 7:200-217 (1967).

  \bibitem{p8} Bregman, L.M.: A relaxation method for finding the common point of convex sets and its application to the solution of problems in convex programming. USSR Comput. Math. Math. Phys. 7, 200-217 (1967).

  \bibitem{p9} Browder, F.E.; Petryshyn, W.V. Construction of fixed points of nonlinear mappings in Hilbert spaces. J. Math.Anal. Appl. 20, 197-228 (1967). [CrossRef]

  \bibitem{p10} Butnariu, D., Censor, Y., Reich, S.: Iterative averaging of entropic projections for solving stochastic convex feasibility problems. Comput. Optim. Appl.  8, 21-39 (1997).

  \bibitem{p11} Butnariu, D., Iusem, A.N.: Totally Convex Functions for Fixed Points Computation and Infinite Dimensional Optimization. Kluwer Academic Publishers, Dordrecht (2000).

  \bibitem{p11.1} Butnariu, D, Iusem, AN, Zalinescu, C: On uniform convexity, total convexity and convergence of the proximal point and outer Bregman projection algorithms in Banach spaces. J. Convex Anal. 10, 35-61 (2003).

  \bibitem{p12} Butnariu, D., Resmerita, E.: Bregman distances, totally convex functions and a method for solving operator equations in Banach spaces. Abstr. Appl. Anal. Art. ID 84919, 1-39 (2006).

 \bibitem{p13} Butnariu D, Resmerita E. Bregman distances, totally convex functions and a method for solving operator equations in Banach spaces. Abstr. Appl. Anal. 1-39. Article ID 84919 (2006).

  \bibitem{p14} Censor Y, Lent A. An iterative row-action method for interval convex program- ming. J Optim Theory Appl. 34:321-353 (1981).

   \bibitem{p14.5}  D. Butnariu and E. Resmerita. Bregman distances, totally convex functions and a
       method for solving operator equations in Banach spaces. Abstr. Appl. Anal. 1-39 (2006).

   \bibitem{p15}  Igarashi, T.; Takahashi, W.; Tanaka, K. Weak convergence theorems for nonspreading mappings and equilibrium problems. In Nonlinear Analysis and Optimization; Akashi, S., Takahashi, W., Tanaka, T., Eds.; Yokohama Publishers: Yokohama, Japan, pp. 75-85 (2008).

  \bibitem{p16} Kassay S. Riech G, Sabach S. Iterative methods for solving systems of variational inequalities in Re exive Banach spaces. J Nonlinear Convex Anal.  10:471-485 (2009).



  \bibitem{p17} Kocourek, P.; Takahashi, W.; Yao, J.-C. Fixed point theorems and weak convergence theorems for generalized hybrid mappings in Hilbert spaces. Taiwan. J. Math.  14, 2497-2511 (2010). [CrossRef]

  \bibitem{p18}  Kosaka, F.; Takahashi, W. Existence and approximation of fixed points of firmly nonexpansive-type mappings in Banach spaces. SIAM. J. Optim.  19, 824-835 (2008). [CrossRef]

    \bibitem{p19} Kosaka, F.; Takahashi, W. Fixed point theorems for a class of nonlinear mappings related to maximal monotone operators in Banach spaces. Arch. Math. (Basel)  91, 166-177 (2008). [CrossRef]

  \bibitem{p20} Ogbuisi FU, Izuchukwu C. Approximating a zero of sum of two monotone operations which solves a fixed point problem in reflexive Banach spaces. Collectanea Mathematica (in print).

  \bibitem{p21} Reich S, Sabach S. Two strong convergence theorems for Bregman strongly nonexpansive operators in reflexive Banach spaces. Nonlinear Anal.  73:122-135 (2010).

  \bibitem{p22} Reich, S., Sabach, S.: Two strong convergence theorems for a proximal method in reflexive Banach spaces. Numer. Funct. Anal. Optim.  31, 22-44 (2010).

  \bibitem{p23} Reich S, Sabach S. Existence and approximation of fixed points of Bregman firmly nonexpansive mappings in reflexive Banach spaces. Fixed-Point Algorithms for Inverse Problems in Science and Engineering. New York: Springer;  p. 299-314 (2010).

  \bibitem{p24} Sabach, S.: Products of finitely many resolvents of maximal monotone mappings in reflexive banach spaces. SIAM J. Optim. 21, 1289-1308 (2011).

  \bibitem{p25} Sabach S. Products of finitely many resolvents of maximal monotone mappings in reflexive Banach spaces. SIAM J Optim. 21(4):1289-1308 (2011).

  \bibitem{p26} Suantai S, Cho YJ, Cholamjiak P. Halpern’s iteration for Bregman strongly nonexpansive mappings in reflexive Banach spaces. Comput Math Appl. 64:489-499 (2012).

  \bibitem{p31q} Takahashi, W. A Strong Convergence Theorem under a New Shrinking Projection Method for Finite Families of Nonlinear Mappings in a Hilbert Space. Mathematics. 8(3):435 (2020).

  \bibitem{p27} Takahashi, W. Fixed point theorems for new nonlinear mappings in a Hilbert space. J. Nonlinear Convex Anal. 11, 79-88 (2010).

  \bibitem{p28} Takahashi, W. Convex Analysis and Approximation of Fixed Points (Japanese); Yokohama Publishers: Yokohama, Japan, 2000

  \bibitem{p29} Takahashi, W. The split common fixed point problem and the shrinking projection method in Banach spaces. J. Convex Anal. 24, 1015-1028 (2017).

   \bibitem{p30} Ugwunnadi, G.C., Ali, B., Idris, I. et al. Strong convergence theorem for quasi-Bregman strictly pseudocontractive mappings and equilibrium problems in Banach spaces. Fixed Point Theory Appl 2014, 231 (2014).

   \bibitem{p31} Zalinescu, C: Convex Analysis in General Vector Spaces. World Scientific, River Edge (2002)

\end{thebibliography}
\end{document}